\theoremstyle{plain}
{
%\swapnumbers
  \newtheorem{thm}{Theorem}[section]
  
  \newtheorem{cor}[thm]{Corollary}
  \newtheorem{lem}[thm]{Lemma}

}
\DeclareMathOperator{\et}{\text{\it \'et}}
\DeclareMathOperator{\GL}{GL}
\newcommand{\Spec}{\text{\rm Spec}\,}
\DeclareMathOperator{\Frac}{Frac}
\DeclareMathOperator{\corad}{corad}
\begin{document}

\title[Non-stable $K_1$-functors of discrete valuation rings]
{Non-stable $K_1$-functors of discrete valuation rings containing a field}

\author{Philippe Gille}\thanks{P. Gille was supported by the project "Group schemes, root systems, and
related representations" founded by the European Union - NextGenerationEU through
Romania's National Recovery and Resilience Plan (PNRR) call no. PNRR-III-C9-2023-
I8, Project CF159/31.07.2023, and coordinated by the Ministry of Research, Innovation and Digitalization (MCID)
of Romania. }
\address{UMR 5208 Institut Camille Jordan - Universit\'e Claude Bernard Lyon 1
43 boulevard du 11 novembre 1918
69622 Villeurbanne cedex - France}  
\email{gille@math.univ-lyon1.fr}
\address{and Institute of Mathematics "Simion Stoilow" of the Romanian Academy,
21 Calea Grivitei Street, 010702 Bucharest, Romania}

\author{Anastasia Stavrova}
\thanks{}
\address{St. Petersburg Department of Steklov Mathematical Institute, nab. r. Fontanki 27, 191023 St. Petersburg, Russia}
\email{anastasia.stavrova@gmail.com}

\maketitle

\begin{abstract}
Let $k$ be a field, and let $G$ be a simply connected semisimple $k$-group which is isotropic and contains a
stricty proper parabolic $k$-subgroup $P$.
Let $D$ be a discrete valuation ring which is a local ring of a smooth algebraic curve over $k$.
We show that $K_1^G(D)=K_1^G(K)$,
where $K$ is the fraction field of $D$ and
$K_1^G(-)=G(-)/E_P(-)$ is the corresponding
non-stable $K_1$-functor, also called the Whitehead group of $G$.
As a consequence, $K_1^G(D)$ coincides with the (generalized) Manin's $R$-equivalence class group of $G(D)$.
\end{abstract}

\section{Introduction}
Let $R$ be a commutative ring with 1. Let $G$ be a reductive group scheme over $R$ in the sense of~\cite{SGA3}.
For any reductive group $G$ over $R$ and a parabolic subgroup $P$ of $G$,
one defines the elementary subgroup $E_P(R)$ of $G(R)$ as the subgroup
generated by the $R$-points of the unipotent radicals of $P$ and of any opposite parabolic $R$-subgroup
$P^-$, and considers the
corresponding non-stable $K_1$-functor $K_1^{G,P}(R)=G(R)/E_P(R)$~\cite{PS,St-poly,St-k1}. It does not depend on the
choice of $P^-$ by~\cite[Exp. XXVI Cor. 1.8]{SGA3}.
In particular, if $A=k$ is a field and $P$ is minimal, $E(k)$ is nothing but the group $G(k)^+$ introduced
by J. Tits~\cite{Tits64}, and $K_1^G(k)$ is the subject of the Kneser--Tits problem~\cite{Gi}. If $G=\GL_n$
and $P$ is a Borel subgroup,
then $K_1^G(R)=\GL_n(R)/E_n(R)$, $n\ge 1$, are the usual non-stable $K_1$-functors of algebraic $K$-theory.

A parabolic $R$-subgroup $P$ in $G$ is called
strictly proper, if it intersects properly every non-trivial semisimple normal $R$-subgroup of $G$.
If $R$ is semilocal (or, more generally, a local-global ring, see~\cite{GiNe}), then $E_P(R)$ is the same
for all strictly proper parabolic $R$-subgroups $P$ of $G$~\cite[Th. 2.1]{St-poly}, and, in particular, is normal in $G(R)$.

Let $R$ be a regular local ring and $K$ be its field of fractions. The Serre--Grothendieck
conjecture (\cite[Remarque, p.31]{Se},~\cite[Remarque 3, p.26-27]{Gr1}, and~\cite[Remarque~1.11.a]{Gr2}) predicts that
for any reductive group $G$ over $R$,
the natural map between the first non-abelian \'etale cohomology sets
\begin{equation}\label{eq:H1}
H^1_{\et}(R,G)\to H^1_{\et}(K,G)
\end{equation}
has trivial kernel. It is known to hold in many cases, in particular, for all regular local rings $R$ which are
equicharacteristic, i.e. contain a field~\cite{Pan3}, and for all isotropic reductive groups over rings which are geometrically regular over
a Dedekind ring~\cite{CF}. The \'etale cohomology functor $H^1_{\et}(-,G)$ is often
viewed as a non-stable, non-abelian version of the $K_0$-functor of algebraic $K$-theory, and
the map~\eqref{eq:H1} itself is reminiscent of the first term in the Gersten conjecture.
Building on this analogy, it is natural to ask if the map
$$
K_1^{G,P}(R)\to K_1^{G,P}(K)
$$
is injective under the same assumptions on $R$.

We say that $G$ has isotropic rank $\ge n$ over $R$, if every non-trivial semisimple normal subgroup of $G$
contains an $R$-subgroup of the form $\mathbb{G}^n_{m,R}$. It was proved in~\cite{St-k1} that
if $G$ has isotropic rank $\ge 2$ over $R$,
and $R$ is any regular local ring containing a field, then
\begin{equation}\label{eq:K1}
K_1^{G,P}(R)\to K_1^{G,P}(K)
\end{equation}
is injective, where $K$ is the fraction field of $R$ and $P$ is a strictly proper parabolic $R$-subgroup.
However, the method of proof was definitely inapplicable to groups of isotropic rank 1. Namely, it relied on the injectivity
of the map
$$
K_1^{G,P}(R[x])\to K_1^{G,P}(R[x]_f),
$$
where $f\in R[x]$ is a monic polynomial; and this injectivity
is false for $G=SL_2$, $f=x$ and $R$ any discrete valuation ring~\cite{Chu}\footnote{In fact,
if $B$ is a standard Borel subgroup of $SL_2$ and $R$ is a discrete valuation ring, then
$K_1^{SL_2,B}(R[x])$ is not a group and, in particular, is not trivial~\cite[Proposition 1.8]{Chu},
while $K_1^{SL_2,B}(R[x]_x)=1$~\cite[Theorem 3.1]{Chu}.} Thus, the injectivity of~\eqref{eq:K1}
is not known in general even for equicharacteristic discrete valuation rings (although for $SL_2$, as well
as for any other simply connected split group, it is trivially true).

In~\cite{GSt} we extended Manin's notion of $R$-equivalence of points from algebraic varieties over fields to schemes
over commutative rings. The very definition of the $R$-equivalence implies
that $K_1^{G,P}(R)=G(R)/E_P(R)$ surjects onto the $R$-equivalence class group $G(R)/\mathcal{R}$
for any $R$~\cite[2.1, 4.3]{GSt}. It has been previously known that for any field $K$ and any simply connected semisimple group $G$ having a strictly parabolic
$K$-subgroup one has $K_1^{G,P}(K)=G(K)/\mathcal{R}$~\cite{Gi}. In~\cite[Prop. 8.10]{GSt} we have extended this
equality to henselian discrete valuation rings and concluded that
the map~\eqref{eq:K1} is an isomorphism for every simply connected semisimple group $G$ and every henselian discrete
valuation ring $R$. In the present paper we obtain the following results in the non-henselian case.

\begin{thm}\label{thm:main}
Let $k$ be a field, let $G$ be a reductive algebraic group over $k$ having a strictly proper parabolic
$k$-subgroup $P$. Let $D$ be a discrete valuation ring which is a local ring at a closed point
of a smooth algebraic curve over $k$, and let
$K$ be the fraction field of $D$. Then $K_1^{G,P}(D)\to K_1^{G,P}(K)$ is injective.
\end{thm}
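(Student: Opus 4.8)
The plan is to compare $D$ with its completion, to strip off the unipotent radical of $P$ by a big‑cell argument, and thereby to reduce everything to an injectivity statement for the anisotropic kernel of $G$. We first make two harmless reductions. Quotienting by the radical torus, passing to the simply connected cover of $G_{\mathrm{der}}$, and using that $H^1_{\et}(D,\mu)\to H^1_{\et}(K,\mu)$ is injective for $\mu$ finite of multiplicative type (a unit which is an $n$‑th power in $K$ is already one in $D$, by the valuation), we may assume $G$ is semisimple and simply connected. By the parabolic‑independence of $E_P$ over semilocal rings and over fields~\cite[Th.~2.1]{St-poly} we may also take $P$ to be a minimal parabolic $k$‑subgroup; since $G$ carries some strictly proper parabolic it has no anisotropic factors, so this $P$ is strictly proper and its Levi $L=Z_G(S)$, $S$ a maximal split torus, is reductive with anisotropic, simply connected derived group.

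Let $\widehat D$ be the completion of $D$ and $\widehat K=\Frac(\widehat D)$. Since $\widehat D$ is a henselian discrete valuation ring, $K_1^{G,P}(\widehat D)\to K_1^{G,P}(\widehat K)$ is bijective by~\cite[Prop.~8.10]{GSt}. As $E_P$ is functorial, an element of $G(D)$ elementary over $K$ is elementary over $\widehat K$, hence, by the bijection, over $\widehat D$; chasing the commutative square formed by $D,K,\widehat D,\widehat K$ then shows it is enough to prove: \emph{if $g\in G(D)$ lies in $E_P(\widehat D)\cap E_P(K)$, then $g\in E_P(D)$.}

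Write $g=\widehat e_1\cdots\widehat e_m$ with each $\widehat e_j$ in $U_P(\widehat D)$ or in $U_{P^-}(\widehat D)$. The unipotent radicals $U_P$ and $U_{P^-}$ are $D$‑isomorphic to affine spaces, so $U_{P^{\pm}}(D)$ is $\mathfrak m_D$‑adically dense in $U_{P^{\pm}}(\widehat D)$; choose $e_j\in U_{P^{\pm}}(D)$ with $e_j$ congruent to $\widehat e_j$ modulo a high power of $\mathfrak m_D$ and set $g'=e_1\cdots e_m\in E_P(D)$. Then $g'^{-1}g$ still lies in $E_P(K)$, is elementary over $D$ if and only if $g$ is, and lies in $G(D)\cap G(\widehat D,\mathfrak m_{\widehat D}^{N})=G(D,\mathfrak m_D^{N})$ for $N\gg 0$ (using $D/\mathfrak m_D^{N}=\widehat D/\mathfrak m_{\widehat D}^{N}$). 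Replacing $g$ by $g'^{-1}g$, we may assume $g\equiv 1\pmod{\mathfrak m_D^{N}}$. The closed point of $\Spec D$ is then carried into the big cell $U_{P^-}\cdot L\cdot U_P$, hence so is all of $\Spec D$, and the resulting unique factorisation $g=v_-\,\ell\,v_+$ with $v_\pm\in U_{P^{\pm}}(D)$, $\ell\in L(D)$, has $v_\pm,\ell\equiv 1\pmod{\mathfrak m_D^{N}}$. Since $v_\pm\in E_P(D)$, everything reduces to showing $\ell\in E_P(D)$, knowing $\ell\in L(D)\cap E_P(\widehat D)\cap E_P(K)$.

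This final reduction is where the real work lies, and the hard part will be precisely this anisotropic‑Levi case. Put $\mathcal E(R):=E_P(R)\cap L(R)$, a normal subgroup of $L(R)$ with $L(R)/\mathcal E(R)\hookrightarrow K_1^{G,P}(R)$. Over the field $\widehat K$ the relative Bruhat decomposition, together with the fact that for a minimal $P$ the Weyl representatives lie in $E_P$, gives $K_1^{G,P}(\widehat K)=L(\widehat K)/\mathcal E(\widehat K)$, and lifting the Bruhat cell of the residue point yields the same identity over the henselian ring $\widehat D$; so~\cite[Prop.~8.10]{GSt} already provides a bijection $L(\widehat D)/\mathcal E(\widehat D)\xrightarrow{\ \sim\ }L(\widehat K)/\mathcal E(\widehat K)$, and, via the commutative square once more, it remains to prove that $L(D)/\mathcal E(D)\to L(\widehat D)/\mathcal E(\widehat D)$ is injective, i.e. $L(D)\cap E_P(\widehat D)=\mathcal E(D)$, for the anisotropic group $L$. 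Using the Chevalley commutator formula and Whitehead‑type relations inside the rank‑one subgroups $\langle U_\alpha,U_{-\alpha}\rangle$ one identifies $\mathcal E$ with a ``reduced Whitehead'' functor of $L$; after a last dévissage of $L$ — its central torus, then the simply connected anisotropic $L_{\mathrm{der}}$, then the norm‑one groups of the central simple algebras and hermitian forms attached to it — the required injectivity should follow from purity for $SK_1$ of Azumaya algebras, and its hermitian analogues, over the regular local ring $D=\mathcal O_{X,x}$ of the curve $X$. This is exactly where the hypothesis that $D$ is the local ring of a smooth curve over a field, hence equicharacteristic, enters; the preceding steps are then formal.
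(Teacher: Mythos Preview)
Your strategy is genuinely different from the paper's, and the difference matters. The paper does not compare $D$ with its completion at all. Instead, after the same reduction to the simply connected case, it uses Ojanguren's lemma to produce an essentially \'etale local homomorphism $\phi:k[x]_p\to D$ sending a uniformizer to a uniformizer and inducing an isomorphism on residue fields; a factorisation lemma (Corollary~\ref{cor:DD'}) then gives $E_P(K)=E_P(D)\cdot E_P(\phi(k(x)))$, and the problem reduces to the case $D=k[x]_p$. That case is dispatched directly (Lemma~\ref{lem:k[x]p}) using the Margaux--Soul\'e theorem $G(k[x])=G(k)\cdot E_P(k[x])$ together with Gille's homotopy invariance $K_1^{G,P}(k)\cong K_1^{G,P}(k(x))$ for simply connected $G$. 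No completion, no big cell, no anisotropic Levi.

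Your reduction steps through the big cell are reasonable, but the proposal has a genuine gap at the end. You reduce everything to showing $L(D)\cap E_P(\widehat D)\subseteq E_P(D)$ for the anisotropic Levi $L$, and then assert that this ``should follow'' from purity for $SK_1$ of Azumaya algebras and ``hermitian analogues'' after a d\'evissage of $L$. This is not a proof: the functor $\mathcal E(R)=L(R)\cap E_P(R)$ is not a priori any standard reduced-Whitehead functor, and identifying it as one uniformly over rings (not just over fields, where Tits' theory applies) is itself substantial; the purity or injectivity statements you invoke for unitary and orthogonal $SK_1$ over a DVR are delicate and not obviously available in the literature in the form you need; and for exceptional anisotropic kernels there is no classical description of this kind to fall back on. In effect you have traded the original problem for one that is at least as hard and then gestured at a case-by-case program. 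The paper's route through the \'etale slice and the single uniform theorem $K_1^{G,P}(k)\cong K_1^{G,P}(k(x))$ bypasses all of this, with no type-by-type analysis required.
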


\begin{cor}\label{cor:sep}
Let $k$ be a field, let $G$ be a reductive algebraic group over $k$ having a strictly proper parabolic
$k$-subgroup $P$. Let $D$ be a discrete valuation ring which is a local ring of a finitely generated $k$-algebra
and assume that the residue field of $D$ is a separable extension of $k$. Let
$K$ be the fraction field of $D$. Then $K_1^{G,P}(D)\to K_1^{G,P}(K)$ is injective.
\end{cor}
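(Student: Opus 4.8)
The plan is to deduce Corollary~\ref{cor:sep} from Theorem~\ref{thm:main} by enlarging the base field: we replace $k$ by a purely transcendental subextension of the residue field of $D$, over which that residue field becomes finite separable, and then realize $D$ as the local ring at a closed point of a smooth curve over this larger field. Concretely, write $D=A_{\mathfrak p}$ with $A$ a finitely generated $k$-algebra and $\mathfrak p\in\Spec A$; since $D$ is a domain we may replace $A$ by $A/\ker(A\to D)$ and assume $A$ is a domain with $\Frac(A)=K$, so that $\mathrm{ht}\,\mathfrak p=\dim D=1$ and $\dim A=1+\dim(A/\mathfrak p)=:1+r$. The residue field $\kappa=\Frac(A/\mathfrak p)$ is finitely generated over $k$ and separable over $k$ by hypothesis, hence separably generated; after inverting finitely many elements of $A$ lying outside $\mathfrak p$ (which leaves $D$ unchanged) we may assume $\kappa$ has a separating transcendence basis equal to the image of a tuple $t_1,\dots,t_r\in A$.

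Now set $F:=k(t_1,\dots,t_r)=\Frac\bigl(k[t_1,\dots,t_r]\bigr)$, the $t_i$ being algebraically independent over $k$ since their images in $\kappa$ are. Because $\mathfrak p\cap k[t_1,\dots,t_r]=0$, localizing $A$ at $S:=k[t_1,\dots,t_r]\setminus\{0\}$ does not meet $\mathfrak p$, and $B_0:=A_S=A\otimes_{k[t_1,\dots,t_r]}F$ is a finitely generated $F$-domain with $(B_0)_{\mathfrak p B_0}=A_{\mathfrak p}=D$. One checks that $\dim B_0=\dim A-r=1$, that $\mathfrak q:=\mathfrak p B_0$ is a maximal ideal, and that $B_0/\mathfrak q=(A/\mathfrak p)\otimes_{k[t_1,\dots,t_r]}F$ is a finitely generated $F$-domain of dimension $0$, hence equals $\kappa$, which is now \emph{finite} separable over $F$ because the chosen transcendence basis was separating. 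Thus $D=\mathcal O_{\Spec B_0,\mathfrak q}$ is regular (being a DVR), essentially of finite type over $F$, with residue field separable over $F$, and therefore smooth over $F$; so $\mathfrak q$ lies in the open smooth locus of $\Spec B_0\to\Spec F$, and after one further localization $B:=(B_0)_g$ we obtain a smooth affine curve $C:=\Spec B$ over $F$ carrying a closed point $\mathfrak q$ with $\mathcal O_{C,\mathfrak q}=D$.

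Finally, base-change $G$ and $P$ to $F$: $G_F$ is reductive and $P_F$ is again a strictly proper parabolic $F$-subgroup (strict properness is insensitive to field extension, as one sees by passing to $\bar k$). Since the elementary subgroup $E_P(R)\le G(R)$ depends only on the pair $(G_R,P_R)$, the non-stable $K_1$-functor of $(G,P)$ on $F$-algebras coincides with that of $(G_F,P_F)$, so Theorem~\ref{thm:main} applied over $F$ to the discrete valuation ring $D=\mathcal O_{C,\mathfrak q}$ yields injectivity of $K_1^{G,P}(D)\to K_1^{G,P}(K)$. Granting Theorem~\ref{thm:main}, the only delicate points are the two standard field-theoretic facts used here — that a separating transcendence basis of $\kappa/k$ makes $\kappa$ a \emph{finite} separable extension of $F=k(t_1,\dots,t_r)$, and that a regular local ring essentially of finite type over a field with separable residue field is smooth over that field — so the argument is a pure spreading-out and carries no content beyond Theorem~\ref{thm:main} itself.
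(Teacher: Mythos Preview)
Your argument is correct and follows essentially the same route as the paper: the paper packages the spreading-out step into Lemmas~\ref{lem:locmax-1} and~\ref{lem:locmax-2} (lift a separating transcendence basis of the residue field, tensor up to $F=k(t_1,\dots,t_r)$, then use that a regular local ring essentially of finite type over a field with separable residue field is smooth), and then invokes Theorem~\ref{thm:main} over $F$. Your write-up is in fact more explicit than the paper's about the dimension count and the need to localize $A$ before lifting the transcendence basis.
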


Corollary~\ref{cor:sep} is an easy consequence of Theorem~\ref{thm:main}, since one may show that any $D$ as in this
Corollary is in fact a local ring of a smooth curve over a suitable transcendental extension of $k$. This is well-known to
specialists, however, we were unable to find an exact reference, so we provide a proof in the end of the paper.

\begin{cor}\label{cor:sc-R}
Let $k$, $G$, $P$ and $D$ be as in Theorem~\ref{thm:main}, or as in Corollary~\ref{cor:sep}. Assume, moreover, that
$G$ is a simply connected semisimple group. Then $K_1^{G,P}(D)\cong G(D)/\mathcal{R}$.
\end{cor}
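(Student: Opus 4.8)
The plan is to deduce Corollary~\ref{cor:sc-R} formally from Theorem~\ref{thm:main}, using two facts already recalled in the introduction: for an arbitrary commutative ring $R$ the canonical map $K_1^{G,P}(R)\to G(R)/\mathcal{R}$ is a surjection (\cite[2.1, 4.3]{GSt}), and for an arbitrary field $L$ and a simply connected semisimple $L$-group admitting a strictly proper parabolic $L$-subgroup the corresponding map is a bijection (\cite{Gi}).

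First I would set up the commutative square
\[
\begin{array}{ccc}
K_1^{G,P}(D) & \longrightarrow & K_1^{G,P}(K)\\
\downarrow & & \downarrow\\
G(D)/\mathcal{R} & \longrightarrow & G(K)/\mathcal{R}
\end{array}
\]
whose vertical arrows are the canonical surjections above and whose horizontal arrows are induced by the inclusion $D\hookrightarrow K$; commutativity is immediate from the functoriality of both constructions. Next I would check that $G_K$ together with $P_K$ still satisfies the hypotheses of \cite{Gi}: since $D$ arises from a smooth curve over $k$, its fraction field $K$ is a separable extension of $k$, so $G_K$ is again simply connected semisimple, and $P_K$ is again a strictly proper parabolic $K$-subgroup, because properness of a parabolic subgroup is preserved under base field extension on each almost-simple factor. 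Hence the right-hand vertical arrow is an isomorphism.

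Finally, since the top horizontal arrow is injective by Theorem~\ref{thm:main}, a one-line diagram chase yields the injectivity of the left-hand vertical arrow: an element $g\in K_1^{G,P}(D)$ lying in the kernel of $K_1^{G,P}(D)\to G(D)/\mathcal{R}$ maps in $K_1^{G,P}(K)$ to an element killed by $K_1^{G,P}(K)\to G(K)/\mathcal{R}$, hence to the identity, hence $g$ is already the identity in $K_1^{G,P}(D)$. Being both injective and surjective, the left-hand vertical arrow is the desired isomorphism $K_1^{G,P}(D)\cong G(D)/\mathcal{R}$; the case of $D$ as in Corollary~\ref{cor:sep} is handled identically once that corollary is established. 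I do not expect any genuine obstacle here --- granting Theorem~\ref{thm:main}, the corollary is essentially formal --- the only minor points needing care being the commutativity of the square and the verification that $P_K$ stays strictly proper over $K$.
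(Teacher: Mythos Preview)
Your proposal is correct and follows essentially the same route as the paper's own proof: both combine the surjectivity of $K_1^{G,P}(D)\to G(D)/\mathcal{R}$ from \cite{GSt}, the injectivity of $K_1^{G,P}(D)\to K_1^{G,P}(K)$ from Theorem~\ref{thm:main}, and the isomorphism $K_1^{G,P}(K)\cong G(K)/\mathcal{R}$ from \cite{Gi} in the same commutative-square diagram chase. Your verification that $P_K$ remains strictly proper is a harmless extra detail (since $G$ and $P$ are defined over $k$, not merely over $D$, this is automatic and the separability of $K/k$ plays no role).
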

\begin{proof}
As mentioned above, there is a surjective homomorphism
$K_1^{G,P}(D)\to G(D)/\mathcal{R}$. Since $K_1^{G,P}(D)\to K_1^{G,P}(K)$ is injective and $K_1^{G,P}(K)\to G(K)/\mathcal{R}$
is an isomorphism~\cite{Gi}, we conclude that $K_1^{G,P}(D)\to G(D)/\mathcal{R}$ is injective. Hence the claim.
\end{proof}

The surjectivity of the map $K_1^{G,P}(D)\to K_1^{G,P}(K)$ may hold only for simply connected semisimple groups,
since it fails, for example, for $D=k[[t]]$ and $G=GL_n$ or $PGL_n$. We settle it in this generality.

\begin{thm}\label{thm:surj}
Let $D$ be any discrete valuation ring, let $K$ be the fraction field of $D$, and
let $G$ be a simply connected semisimple group scheme over $D$ having a strictly proper parabolic
$D$-subgroup scheme $P$. Then the natural map $K_1^{G,P}(D)\to K_1^{G,P}(K)$ is surjective.
\end{thm}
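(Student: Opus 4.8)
The plan is to prove surjectivity by reducing modulo the maximal ideal and lifting. Let $\pi$ be a uniformizer of $D$ and let $\kappa = D/\pi D$ be the residue field. The starting observation is that we must show every class in $G(K)/E_P(K)$ is represented by an element of $G(D)$; equivalently, for every $g \in G(K)$ we must find $e_1, e_2 \in E_P(K)$ with $e_1 g e_2 \in G(D)$, after which $g \equiv e_1 g e_2$ in $K_1^{G,P}(K)$ (using that $E_P$ is normal, which holds over the field $K$ and — by \cite[Th. 2.1]{St-poly} — is not even needed since we can manipulate on both sides). So the entire content is a statement about the double coset space $E_P(K) \backslash G(K) / E_P(K)$, or more crudely about $G(K)$ modulo the subgroup generated by $G(D)$ and $E_P(K)$.

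First I would pass to the completion $\widehat D$ with residue field $\kappa$ and fraction field $\widehat K$; since $\widehat D$ is a henselian discrete valuation ring, \cite[Prop. 8.10]{GSt} (quoted in the excerpt) gives that $K_1^{G,P}(\widehat D) \to K_1^{G,P}(\widehat K)$ is an \emph{isomorphism}, in particular surjective. The task is then to descend this surjectivity from $\widehat K$ back to $K$. For this I would use the Bruhat decomposition of $G(\widehat K)$ with respect to a minimal parabolic defined over $\widehat K$: any $g \in G(K) \subseteq G(\widehat K)$ lies in some Bruhat cell, and the unipotent parts of such a decomposition are built from root subgroups $U_\alpha$, whose $\widehat K$-points one wants to reduce to $D$-points modulo $E_P$. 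Concretely, for a root subgroup $U_\alpha \cong \Ga$, an element $u_\alpha(x)$ with $x \in \widehat K$ can be written $u_\alpha(x) = u_\alpha(x_0) \cdot u_\alpha(x - x_0)$ with $x_0 \in K$ approximating $x$ $\pi$-adically and $x - x_0 \in \widehat D$ (or even in a high power of the maximal ideal), which puts the second factor into $E_P(\widehat D) \subseteq E_P(\widehat D)$; combined with a density/approximation argument this should let one replace $\widehat K$ by $K$ and $\widehat D$ by $D$.

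The cleanest route, which I would pursue first, is the following: since $G$ is isotropic over $D$, fix a maximal split $D$-torus $S \subseteq P$ and the associated (relative) root subgroups $U_\alpha$, $\alpha \in \Phi$, each $D$-isomorphic to an affine space; these generate $E_P$ together. The key lemma to establish is a \emph{Bruhat-type normal form over $K$ with bounded denominators}: every $g \in G(K)$ can be written $g = u^- \cdot n \cdot h \cdot u^+$ where $u^\pm$ lie in the $K$-points of the unipotent radicals of opposite parabolics, $h \in S(K)$, and $n$ is a Weyl-group representative — and then one massages $u^\pm$ and $h$. Because $G$ is simply connected, the torus part $h$ and the Weyl element $n$ can be absorbed: the cocharacter lattice of $S$ is generated by coroots, so $h$ is a product of $\alpha^\vee(t)$'s, and the standard identity expressing $\alpha^\vee(t)$ and the Weyl representative $w_\alpha(t)$ in terms of $u_{\pm\alpha}(\cdot)$ shows these contribute to $E_P(K)$ whenever the arguments are units, while for non-units one clears denominators against the $u^\pm$ factors. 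This is exactly the mechanism by which simple connectedness is essential (and why $\GL_n$, $\PGL_n$ fail: there the torus is not generated by coroots).

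The main obstacle I anticipate is precisely the bookkeeping in that last step: controlling the $\pi$-adic valuations of the $U_\alpha$-coordinates appearing in the Bruhat form and showing that, after multiplying by suitable elements of $E_P(K)$ on the left and right, one can simultaneously move all of them into $D$. Over a henselian base this is handled by \cite[Prop. 8.10]{GSt}; the non-henselian case of Theorem~\ref{thm:surj} should follow by the approximation argument above, but verifying that the approximation does not leave the relevant Bruhat cell (the cells are only locally closed) and that the ``bounded denominators'' really are bounded — independently of the cell — requires care. A fallback, if the direct Bruhat argument gets unwieldy, is to argue functorially: embed the problem into $\widehat D$, use surjectivity there, and then use that $G(K) \cap G(\widehat D) = G(D)$ (which holds because $D = K \cap \widehat D$ inside $\widehat K$) together with the fact that $E_P(\widehat K)$ is generated by root elements to push a representative back down to $K$, modulo $E_P(K)$, by the one-variable approximation in each root coordinate. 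Either way, the heart of the matter is the interplay between the root-subgroup combinatorics (needing simple connectedness) and $\pi$-adic approximation.
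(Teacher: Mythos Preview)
Your fallback route is precisely the paper's proof, and the Bruhat approach is an unnecessary detour. The skeleton is correct: pass to the completion $\widehat D$, use \cite[Prop.~8.10]{GSt} to write any $g\in G(K)\subseteq G(\widehat K)$ as $g_0\cdot e$ with $g_0\in G(\widehat D)$ and $e\in E_P(\widehat K)$, and exploit $G(K)\cap G(\widehat D)=G(D)$. What you are missing is the one substantive lemma that makes this go through: the factorization $E_P(\widehat K)=E_P(\widehat D)\cdot E_P(K)$. Granting it, write $e=e_1 e_2$ with $e_1\in E_P(\widehat D)$ and $e_2\in E_P(K)$; then $g e_2^{-1}=g_0e_1\in G(\widehat D)\cap G(K)=G(D)$ and you are done. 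Your ``one-variable approximation in each root coordinate'' is the right intuition---indeed $\widehat K=K+\widehat D$ gives $U_Q(\widehat K)=U_Q(K)\cdot U_Q(\widehat D)$ for each unipotent radical (Lemma~\ref{lem:Ui})---but it does not by itself yield the factorization for all of $E_P$: an element of $E_P(\widehat K)$ is an alternating product of $U_P$- and $U_{P^-}$-elements, and when you split each factor and try to collect the $K$-pieces on one side, the commutators you create land back in $E_P(\widehat K)$ rather than in $E_P(K)$ or $E_P(\widehat D)$. The paper closes this with Corollary~\ref{cor:DD'} (proved via Lemma~\ref{lem:EE}), whose key input is a Gauss-type decomposition available because $D$ is local: $E_P(K)=E_P(D)\cdot\bigl(E_P(K)\cap Q(K)\bigr)$ for both $Q=P$ and $Q=P^-$ (Lemma~\ref{lem:DEP}), and this is what lets one shuffle the factors past each other in a controlled way.

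Two corrections to your framing. First, simple connectedness enters \emph{only} through the henselian input \cite[Prop.~8.10]{GSt}; the factorization $E_P(\widehat K)=E_P(\widehat D)\cdot E_P(K)$ holds for any reductive $G$ with a strictly proper parabolic, so the descent step involves no ``root-subgroup combinatorics needing simple connectedness''. Second, the Bruhat-cell analysis you propose as the primary route would essentially reprove the henselian case from scratch while still facing the bounded-denominator and cell-closure issues you yourself flag; it is far cleaner to take the henselian case as a black box and isolate the descent lemma above.
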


\section{Preliminary lemmas}

     \begin{lem}\label{lem:Ui}
Let $B$ be a Noetherian commutative ring, let $G$ be a reductive group over $B$, let $P$ be a parabolic $B$-subgroup of $G$ and
let $U_P$ be the unipotent radical of $P$. Let $A$ be a commutative $B$-algebra and let $C_1,C_2\subseteq A$ be two
$B$-subalgebras of $A$ such that $A=C_1+C_2$. Then $U_P(A)=U_P(C_1)\cdot U_P(C_2)$.
\end{lem}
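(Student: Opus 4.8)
Here is how I would approach the lemma.

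The plan is to deduce the statement from the structure of $U_P$ as an iterated extension of vector groups. Recall from \cite[Exp.~XXVI]{SGA3} that $U_P$ admits a filtration $U_P = V_0 \supseteq V_1 \supseteq \cdots \supseteq V_n = 1$ by closed normal smooth $B$-subgroup schemes whose successive quotients $W_i := V_{i-1}/V_i$ are vector groups over $B$; recall that a vector group $W(M)$, for a finitely generated projective $B$-module $M$, has $W(M)(R) = M \otimes_B R$ functorially in the $B$-algebra $R$. The only external input I will need beyond this structure is that for \emph{every} $B$-algebra $R$ the map $V_{i-1}(R) \to W_i(R)$ is surjective. This holds because the filtration may be chosen so that each projection $V_{i-1} \to W_i$ admits a section as a morphism of $B$-schemes; equivalently, $V_i$ being a $B$-split unipotent group scheme and $\Spec R$ being affine, the pointed set $H^1_{\mathrm{fppf}}(\Spec R, V_i)$ is trivial, as one sees by d\'evissage along the filtration from $H^1_{\mathrm{fppf}}(\Spec R, W(M)) = H^1_{\mathrm{Zar}}(\Spec R, \widetilde{M}) = 0$. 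It is essential here that $C_1$ and $C_2$ are honest $B$-algebras, so that $\Spec C_1$ and $\Spec C_2$ are affine.

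First I would settle the case where $U_P = W(M)$ is a single vector group. Then $U_P(A) = M \otimes_B A$ is abelian, so $U_P(C_1) \cdot U_P(C_2)$ equals the image of $M \otimes_B C_1 \oplus M \otimes_B C_2$ in $M \otimes_B A$; applying the right-exact functor $M \otimes_B (-)$ to the surjection $C_1 \oplus C_2 \twoheadrightarrow A$, $(c_1, c_2) \mapsto c_1 + c_2$, shows that this image is all of $M \otimes_B A = U_P(A)$.

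Next I would prove, by descending induction on $i$ from $n$ down to $0$, that $V_i(A) = V_i(C_1) \cdot V_i(C_2)$; the case $i = n$ is trivial and the case $i = 0$ is the assertion. Assuming the statement for $V_i$, take $g \in V_{i-1}(A)$. By the vector-group case applied to $W_i$, the image of $g$ in $W_i(A)$ is the product of the images of some $w_1 \in W_i(C_1)$ and $w_2 \in W_i(C_2)$; lifting $w_1$ and $w_2$ through the surjections $V_{i-1}(C_m) \twoheadrightarrow W_i(C_m)$ of the first paragraph gives $h_m \in V_{i-1}(C_m)$ such that $h_1 h_2$ and $g$ have the same image in $W_i(A)$. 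Hence $v := h_1^{-1} g\, h_2^{-1}$ lies in $V_i(A) = \ker\bigl(V_{i-1}(A) \to W_i(A)\bigr)$, so by the inductive hypothesis $v = v_1 v_2$ with $v_m \in V_i(C_m) \subseteq V_{i-1}(C_m)$. Then $g = h_1 v_1 v_2 h_2 = (h_1 v_1)(v_2 h_2)$ lies in $V_{i-1}(C_1) \cdot V_{i-1}(C_2)$, since $h_1 v_1 \in V_{i-1}(C_1)$ and $v_2 h_2 \in V_{i-1}(C_2)$.

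The reverse inclusion $U_P(C_1) \cdot U_P(C_2) \subseteq U_P(A)$ is immediate from the functoriality of $U_P$. I expect the only genuine obstacle to be the surjectivity $V_{i-1}(R) \twoheadrightarrow W_i(R)$ for arbitrary affine $B$-schemes $\Spec R$: here one must invoke the appropriate form of the structure theorem for the unipotent radical of a parabolic together with the existence of scheme-theoretic sections of its graded pieces, or, equivalently, the triviality of $H^1_{\mathrm{fppf}}$ of a $B$-split unipotent group scheme over an affine base. Everything else is bookkeeping with the group law along the filtration.
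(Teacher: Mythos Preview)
Your proof is correct and follows the same overall architecture as the paper's: invoke the SGA3 filtration of $U_P$ with vector-group subquotients, settle the vector-group case directly via right-exactness of $M\otimes_B(-)$, and then run a descending induction along the filtration, using the surjectivity of $V_{i-1}(R)\to W_i(R)$ coming from $H^1_{\mathrm{fppf}}(\Spec R,V_i)=0$ over affines.

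The one substantive difference is in the inductive step. The paper exploits the finer commutator grading $[U_i(R),U_j(R)]\subseteq U_{i+j+1}(R)$ from \cite[Exp.~XXVI, Prop.~2.1]{SGA3}: after writing $U_i(A)=U_i(C_1)U_i(C_2)U_{i+1}(A)$ it must push a trailing $U_{i+1}(C_1)$-factor leftwards through $U_i(C_2)$, which it does by trading a commutator for an element of $U_{2i+2}(A)$ and absorbing that via the induction hypothesis at the deeper level $2i+2$. Your elementwise choice $v=h_1^{-1}g\,h_2^{-1}$ places the $C_1$-factor on the left and the $C_2$-factor on the right from the outset, so no commutator shuffle is needed and the argument uses only that $V_i$ is the kernel of $V_{i-1}\to W_i$. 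This makes your inductive step slightly cleaner; the paper's version, on the other hand, makes visible why the nilpotency/commutator structure of $U_P$ is relevant, which is of independent interest.
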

\begin{proof}
By~\cite[Exp. XXVI, Prop. 2.1]{SGA3} there is a sequence
$$
U_0=U_P\supset U_1\supset U_2\supset\ldots U_n\supset \ldots
$$
of $B$-subgroup schemes of $U_P$ with the following properties.
\begin{enumerate}
\item Each $U_i$ is $B$-smooth, connected and a closed characteristic
subgroup of $P$.
\item For every $B$-algebra $R$ one has $[U_i(R),U_j(R)]\subseteq U_{i+j+1}(R)$ for all $i,j\ge 0$.
\item For all $i\ge 0$ there is a finitely generated projective $B$-module $V_i$ and an isomorphism of $B$-group schemes

$U_i/U_{i+1}\cong W(V_i)$, where $W(V_i)$ is the canonical affine $B$-scheme corresponding to $V_i$ in the sense of~\cite{SGA3}.
\item One has $U_i=1$, as soon as $i>\dim_s((U_P)_s)$ for all $s\in\Spec(B)$.
\end{enumerate}
Note that since $B$ is Noetherian, the last property implies that there is a finite integer $N\ge 0$ such that
$U_{N+1}=1$.
Taken together, these properties also imply that $H^1_{\et}(R,U_i)=0$ for any commutative $B$-algebra $R$ and $i\ge 0$,
see the proof of~\cite[Exp. XXVI, Cor. 2.2]{SGA3}. As a consequence, there are short exact sequences of groups
$$
1\to U_{i+1}(R)\to U_i(R)\to (U_i/U_{i+1})(R)\to 1.
$$

We prove that $U_i(A)=U_i(C_1)\cdot U_i(C_2)$ by descending induction on $i$. If $i=N+1$, this equality is clear. Assume
it holds for $U_{i+1}$ and prove it for $U_i$.
We have
$$
(U_i/U_{i+1})(A)\cong V_i\otimes_B A=V_i\otimes_B C_1+V_i\otimes_B C_2=(U_i/U_{i+1})(C_1)\cdot(U_i/U_{i+1})(C_2).
$$
Then
$$
U_i(A)=U_{i}(C_1)\cdot U_{i}(C_2)\cdot U_{i+1}(A).
$$
By the inductive assumption, we have $U_{i+1}(A)=U_{i+1}(C_2)U_{i+1}(C_1)$ (note that $C_1,C_2$ are interchangeable
in all statements). Then
$$
U_i(A)=U_{i}(C_1)\cdot U_{i}(C_2)\cdot U_{i+1}(A)=U_i(C_1)U_i(C_2)U_{i+1}(C_1)
$$
$$\subseteq
U_i(C_1)U_{i+1}(C_1)[U_{i+1}(C_1),U_i(C_2)]U_i(C_2)
\subseteq U_i(C_1)U_{2i+2}(A)U_i(C_2)
$$
$$
=U_i(C_1)U_{2i+2}(C_1)U_{2i+2}(C_2)U_{i}(C_2)=U_i(C_1)U_i(C_2).
$$
This finishes the proof.
\end{proof}

\begin{lem}\label{lem:DEP}
Let $D$ be a Dedekind domain, let $G$ be a reductive group over $D$, let $P$ be a strictly proper parabolic $D$-subgroup
of $G$. Assume that $(G/P)(D)=G(D)/P(D)$ and $G(D)=E_P(D)P(D)$. Then $G(D_S)=E_P(D)\cdot P(D_S)$ and
$$
E_P(D_S)=E_P(D)\cdot (E_P(D_S)\cap P(D_S))
$$
for any multiplicatively closed subset $S$ of $D$.
\end{lem}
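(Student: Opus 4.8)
The plan is to first establish the decomposition $G(D_S)=E_P(D)\cdot P(D_S)$, and then to deduce the statement about elementary subgroups by a short formal argument. We may assume $0\notin S$, since otherwise $D_S=0$ and everything is trivial; then $D_S$ is a Dedekind domain (possibly a field) whose fraction field is the fraction field $K$ of $D$.

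For the first equality I would use two inputs. The geometric one is that $G/P$ is projective, hence proper, over $D$~\cite{SGA3}, so that over the Dedekind base the valuative criterion of properness gives $(G/P)(D)=(G/P)(K)$ and likewise $(G/P)(D_S)=(G/P)(K)$; thus the restriction map $(G/P)(D)\to(G/P)(D_S)$ is a bijection. The group-theoretic one is that $\pi\colon G\to G/P$ is a right $P$-torsor, so whenever a point $\bar x\in(G/P)(A)$ lifts to some $h\in G(A)$ (for a $D$-algebra $A$), the fibre of $\pi$ over $\bar x$ is a trivial torsor and $\{g\in G(A):\pi(g)=\bar x\}=h\cdot P(A)$. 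Now take $g\in G(D_S)$. Transport $\pi(g)\in(G/P)(D_S)$ through the bijection above to a point $y\in(G/P)(D)$, and use the hypothesis $(G/P)(D)=G(D)/P(D)$ to write $y=\pi(h)$ with $h\in G(D)$. Regarding $h$ inside $G(D_S)$ we then have $\pi(h)=\pi(g)$ in $(G/P)(D_S)$, hence $g\in h\cdot P(D_S)$. Finally write $h=eq$ with $e\in E_P(D)$ and $q\in P(D)$, using $G(D)=E_P(D)P(D)$; this gives $g\in E_P(D)\cdot P(D_S)$. The reverse inclusion being obvious, $G(D_S)=E_P(D)\cdot P(D_S)$.

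The second equality is then essentially formal. The inclusion $\supseteq$ holds because $E_P$ is functorial in the ring, so $E_P(D)\subseteq E_P(D_S)$. Conversely, given $\varepsilon\in E_P(D_S)\subseteq G(D_S)=E_P(D)\cdot P(D_S)$, write $\varepsilon=e_0p$ with $e_0\in E_P(D)$ and $p\in P(D_S)$; then $p=e_0^{-1}\varepsilon\in E_P(D_S)$ (as $e_0\in E_P(D)\subseteq E_P(D_S)$) and $p\in P(D_S)$, so $p\in E_P(D_S)\cap P(D_S)$ and $\varepsilon\in E_P(D)\cdot\bigl(E_P(D_S)\cap P(D_S)\bigr)$. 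I expect the only point requiring genuine care to be the properness step — verifying that $(G/P)(D)\to(G/P)(D_S)$ is bijective, i.e.\ that passing to the localization at $S$ does not create new points of the flag variety; once that is in place, the two standing hypotheses propagate automatically from $D$ to $D_S$ along the torsor $G\to G/P$, and no further work is needed.
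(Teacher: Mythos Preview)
Your proof is correct and follows essentially the same route as the paper: use projectivity of $G/P$ over the Dedekind base to get $(G/P)(D)=(G/P)(K)=(G/P)(D_S)$, combine with the hypothesis $(G/P)(D)=G(D)/P(D)$ to obtain $G(D_S)=G(D)P(D_S)$, then apply $G(D)=E_P(D)P(D)$ and finish the second equality formally. Your torsor discussion simply unpacks the step the paper compresses into ``we conclude that $G(D_S)=G(D)P(D_S)$''.
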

\begin{proof}
Let $K$ be the fraction field of $D$. Since $D$ is a Dedekind domain and $G/P$ is a smooth projective $D$-scheme,
we have $(G/P)(D)=(G/P)(K)=(G/P)(D_S)$ (see e.g.~\cite[Corollaire 7.3.6]{EGAII}).
Since $(G/P)(D)=G(D)/P(D)$, we conclude that $G(D_S)=G(D)P(D_S)$.
Since $G(D)=E_P(D)\cdot P(D)$ by assumption, it follows that $G(D_S)=E_P(D)\cdot P(D_S)$.
Then also $E_P(D_S)=E_P(D)\cdot (E_P(D_S)\cap P(D_S))$.

\end{proof}

\begin{lem}\label{lem:EE}
Let $B\subseteq A$ be two commutative rings, let $h\in B$ be such that $h$ is a non-zero divisor in $A$ and
$B/hB\cong A/hA$. Let $G$ be a reductive group scheme over $B$, let $P,P^-$ be two opposite strictly proper parabolic
$B$-subgroups
of $G$. Assume that
\begin{equation}\label{eq:EBh-0}
E_P(B_h)=E_P(B)\cdot (E_P(B_h)\cap P(B_h))=E_P(B)\cdot (E_P(B_h)\cap P^-(B_h)).
\end{equation}
Then $E_P(A_h)=E_P(A)E_P(B_h)$.
\end{lem}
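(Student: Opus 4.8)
The plan is to use the patching-type hypothesis $B/hB \cong A/hA$ together with a "nearby homomorphism" argument: any $A$-point lying over the unipotent radical can be approximated modulo a high power of $h$ by a $B$-point, and the difference can be corrected using the elementary subgroup of the localization $B_h$. Concretely, recall that $E_P(A_h)$ is generated by $U_P(A_h)$ and $U_{P^-}(A_h)$. So it suffices to show that each of $U_P(A_h)$ and $U_{P^-}(A_h)$ is contained in $E_P(A) E_P(B_h)$; by symmetry (interchanging $P$ and $P^-$, and using that \eqref{eq:EBh-0} is stated for both) it is enough to treat $U_P(A_h)$.

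First I would observe that the hypothesis $B/hB \cong A/hA$, combined with $h$ being a non-zero divisor in $A$ (hence in $B$), forces $A = B + h^m A$ for every $m \ge 1$: indeed $A = B + hA$ from the surjectivity $B \to A/hA$, and iterating gives $A = B + h^m A$. Applying Lemma \ref{lem:Ui} with the two $B$-subalgebras $C_1 = B$ and $C_2 = h^m A$ of $A$ (note $A = B + h^m A$) yields $U_P(A) = U_P(B)\, U_P(h^m A)$. The point of taking $m$ large is that $U_P(h^m A)$ is "close to $1$ over $A_h$": an element of $U_P(h^m A)$, viewed in $U_P(A_h)$, differs from $1$ by something divisible by $h^m$. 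Now take an arbitrary $u \in U_P(A_h)$. Clearing denominators, $u \in U_P(h^{-N}A)$ for some $N$; after multiplying by a suitable element of $U_P(B_h) \subseteq E_P(B_h)$ we may arrange — using $A = B + h^{2N}A$ and Lemma \ref{lem:Ui} again — that $u = u_B \cdot u'$ with $u_B \in U_P(B_h)$ and $u' \in U_P(A_h)$ congruent to $1$ modulo $h^N$ in a precise sense, i.e. $u'$ lies in the image of $U_P$ of the subring $B + h^{?}A$ localized appropriately. The upshot of this bookkeeping is to reduce to showing: an element of $U_P(A_h)$ which already lies in $U_P(A)$ (no denominators) is in $E_P(A) E_P(B_h)$ — but that is immediate since $U_P(A) \subseteq E_P(A)$.

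The main obstacle is the careful handling of denominators, which is why hypothesis \eqref{eq:EBh-0} is needed in its full strength. Writing $u \in U_P(A_h)$ with a bounded power of $h$ in the denominator, I would use $A = B + h^M A$ (for $M$ chosen larger than that power) to factor, via Lemma \ref{lem:Ui}, $u = v \cdot w$ with $v \in U_P(B_h)$ and $w \in U_P(A)$; then $v \in E_P(B_h)$ and $w \in U_P(A) \subseteq E_P(A)$, giving $u \in E_P(B_h) E_P(A)$. To convert $E_P(B_h) E_P(A)$ into $E_P(A) E_P(B_h)$ one invokes \eqref{eq:EBh-0}: it lets one write any element of $E_P(B_h)$ as $e \cdot p$ with $e \in E_P(B) \subseteq E_P(A)$ and $p \in E_P(B_h)\cap P(B_h)$ (resp. $\cap\, P^-(B_h)$), and the parabolic part can be moved past $U_P(A)$ (resp. $U_{P^-}(A)$) at the cost of staying inside $E_P(A)$, using the normalization properties of $U_P$ under $P$. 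Assembling these factorizations over the finite set of generators of $E_P(A_h)$ and using that $E_P(A) E_P(B_h)$ is stable under the relevant multiplications gives $E_P(A_h) \subseteq E_P(A) E_P(B_h)$; the reverse inclusion is trivial since both $E_P(A)$ and $E_P(B_h)$ map into $E_P(A_h)$.
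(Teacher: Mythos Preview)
Your overall strategy matches the paper's --- use $A_h = B_h + A$ together with Lemma~\ref{lem:Ui} to factor $U_Q(A_h) = U_Q(A)\,U_Q(B_h)$, and combine this with the hypothesis~\eqref{eq:EBh-0} and the fact that $Q$ normalizes $U_Q$ --- but two steps in your write-up do not go through as stated.

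First, the reduction ``it suffices to show $U_P(A_h),\,U_{P^-}(A_h)\subseteq E_P(A)E_P(B_h)$'' is not a reduction at all: $E_P(A)E_P(B_h)$ is not known to be a subgroup, so containing the generators does not imply containing $E_P(A_h)$. (You do eventually acknowledge this with the phrase ``stable under the relevant multiplications'', but that stability \emph{is} the whole content of the lemma.) Relatedly, your application of Lemma~\ref{lem:Ui} with $C_2 = h^mA$ is illegitimate, since $h^mA$ is not a unital $B$-subalgebra of $A$; the correct decomposition, which the paper uses, is $A_h = B_h + A$ with $C_1=B_h$, $C_2=A$.

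Second, and more seriously, your swap step fails as written. If $p\in E_P(B_h)\cap P(B_h)$ and $w\in U_P(A)$, then $pwp^{-1}\in U_P(A_h)$, \emph{not} $U_P(A)$ or $E_P(A)$: conjugation by an element with $h$ in the denominator can introduce denominators. So ``the parabolic part can be moved past $U_P(A)$ at the cost of staying inside $E_P(A)$'' is false. The fix --- which is exactly what the paper does --- is to move the parabolic (or rather its Levi component) past $U_Q(A_h)$ \emph{first} (landing back in $U_Q(A_h)$, which is harmless), and only \emph{then} apply the factorization $U_Q(A_h)=U_Q(A)\,U_Q(B_h)$. Concretely, the paper proves the single inclusion $E_P(B_h)\cdot U_Q(A_h)\subseteq E_P(A)\cdot E_P(B_h)$ for $Q\in\{P,P^-\}$ and inducts on the length of a word in $E_P(A_h)$; this is the clean formulation of the ``stability'' you gesture at.
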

\begin{proof}
Let $P^-$ be any parabolic $B$-subgroup of $G$ opposite to $P$.
Let $L =P \cap P^{-}$ be the Levi subgroup of $P$.
To prove the claim of the Lemma, it is enough to prove that
\begin{equation}\label{eq:EBhUQ}
E_{P}(B_h) \cdot U_Q(A_h)\subseteq E_{P}(A)\cdot E_{P}(B_h),
\end{equation}
where $Q$ is one of $P$, $P^-$. Indeed, we have $E_P(A_h)=\left<U_P(A_h),\, U_{P^-}(A_h)\right>$, so any fixed $g\in E_P(A_h)$
has a presentation $g=u_1u_2\ldots u_n$, $u_i\in U_P(A_h)$ or $u_i\in U_{P^-}(A_h)$. Proceeding by induction on $n$, we deduce
from~\eqref{eq:EBhUQ} that
$$
g\in E_P(A)\cdot E_P(B_h)\cdot U_Q(A_h)\subseteq E_P(A)\cdot E_{P}(A)\cdot E_{P}(B_h)=E_P(A)\cdot E_P(B_h),
$$
as required.

To prove~\eqref{eq:EBhUQ}, we start by applying~\eqref{eq:EBh-0} and obtaining
\begin{equation}\label{eq:EBh2}
E_{P}(B_h)=E_{P}(B)\cdot (E_{P}(B_h)\cap Q(B_h))\subseteq
E_{P}(A)\cdot  (E_{P}(B_h)\cap L(B_h))\cdot U_Q(B_h).
\end{equation}
Since $L(B_h)\le L(A_h)$ normalizes $U_Q(A_h)$, we deduce
that
\begin{equation}\label{eq:EQL}
E_{P}(B_h) \cdot U_Q(A_h)\subseteq E_{P}(A)\cdot  (E_{P}(B_h)\cap L(B_h))\cdot U_Q(A_h)
\subseteq E_{P}(A)\cdot U_Q(A_h)\cdot (E_{P}(B_h)\cap L(B_h)).
\end{equation}
By the choice of $B\subseteq A$, we have $A=B+hA$. Replacing the $A$ on the right-hand side of the latter equality by $B+hA$,
we deduce that $A=B+h^nA$ for any $n\ge 1$. Since $h$ is a non-zero divisor, $B$, $B_h$ and $A$ are subrings of $A_h$,
and the previous equality implies $A_h=B_h+A$. Then by Lemma~\ref{lem:Ui}
we have
$$
U_Q(A_h)=U_Q(A)\cdot U_Q(B_h).
$$
Substituting this equality into~\eqref{eq:EQL}, we obtain~\eqref{eq:EBhUQ}. This finishes the proof.
\end{proof}

\begin{cor}\label{cor:Efg}
Let $k$ be a field, let $G$ be a reductive algebraic group over $k$, and
let $P$ be a strictly proper parabolic subgroup of $G$. Then for any two coprime polynomials $f,g\in k[x]$
one has
$E_P(k[x]_{fg})=E_P(k[x]_f)\cdot E_P(k[x]_g)$.
\end{cor}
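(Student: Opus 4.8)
The plan is to deduce the equality from Lemmas~\ref{lem:EE} and~\ref{lem:DEP} applied over the polynomial ring. First I would dispose of the trivial case: if $f$ or $g$ is a unit of $k[x]$ the claim is immediate, so assume $f,g$ are non-constant, and fix a parabolic $k$-subgroup $P^-$ opposite to $P$. Next I would apply Lemma~\ref{lem:EE} with $B=k[x]$, $A=k[x]_g$ and $h=f\in B$: here $h=f$ is a non-zero divisor of the domain $A$, and, since $f$ and $g$ are coprime, $g$ is invertible in $k[x]/(f)$, so $B/hB=k[x]/(f)\cong k[x]_g/(f)=A/hA$. As $B_h=k[x]_f$ and $A_h=(k[x]_g)_f=k[x]_{fg}$, Lemma~\ref{lem:EE} yields $E_P(k[x]_{fg})=E_P(k[x]_g)\cdot E_P(k[x]_f)$; passing to inverses and using that $E_P$ of a ring is a group then gives $E_P(k[x]_{fg})=E_P(k[x]_f)\cdot E_P(k[x]_g)$, the asserted identity. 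So everything reduces to checking the hypothesis~\eqref{eq:EBh-0} of Lemma~\ref{lem:EE} for $(B,h)=(k[x],f)$, namely
\[
E_P(k[x]_f)=E_P(k[x])\cdot\bigl(E_P(k[x]_f)\cap P(k[x]_f)\bigr)=E_P(k[x])\cdot\bigl(E_P(k[x]_f)\cap P^-(k[x]_f)\bigr).
\]

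For this displayed equality I would invoke Lemma~\ref{lem:DEP} with $D=k[x]$ (a Dedekind domain) and $S=\{1,f,f^2,\dots\}$, so $D_S=k[x]_f$: its conclusion $E_P(D_S)=E_P(D)\cdot(E_P(D_S)\cap P(D_S))$ is the first equality, while applying the same lemma to the opposite parabolic $P^-$ --- which is again strictly proper, and satisfies $E_{P^-}=E_P$ by~\cite[Exp. XXVI, Cor. 1.8]{SGA3} --- gives the second. Thus the proof is reduced to verifying, for $D=k[x]$ and for both $P$ and $P^-$, the two hypotheses of Lemma~\ref{lem:DEP}:
\[
(G/P)(k[x])=G(k[x])/P(k[x])\qquad\text{and}\qquad G(k[x])=E_P(k[x])\cdot P(k[x]).
\]

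The first says that $G(k[x])\to(G/P)(k[x])$ is surjective. The fibre over a point $\varphi$ is a torsor under $P$; its class lies in $H^1_{\et}(k[x],P)$, and, since $U_P$ is filtered by vector groups (as recalled in the proof of Lemma~\ref{lem:Ui}), it comes from a class in $H^1_{\et}(k[x],L)$ for a Levi subgroup $L$ of $P$. By $\Aff^1$-homotopy invariance of $H^1_{\et}(-,L)$ for the reductive $k$-group $L$, this class is pulled back from $\Spec k$; since $G(k)$ acts transitively on the parabolic $k$-subgroups of $G$ of the type of $P$ (Borel--Tits), the specialization of $\varphi$ at $x=0$ lifts to $G(k)$, so the torsor is trivial. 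The second hypothesis I would deduce from a theorem on the affine line, $G(k[x])=G(k)\cdot E_P(k[x])$ for the (necessarily isotropic) reductive $k$-group $G$ --- equivalently, the surjectivity of $K_1^{G,P}(k)\to K_1^{G,P}(k[x])$ --- together with the equality $G(k)=E_P(k)\cdot P(k)$ over the field itself: the latter is obtained by passing to a minimal parabolic $P_0\subseteq P$, which is strictly proper and for which $E_{P_0}(k)=E_P(k)$ is normal in $G(k)$ since $k$ is semilocal~\cite[Th. 2.1]{St-poly}, so that the relative Bruhat decomposition, with representatives of the relative Weyl group chosen inside $E_{P_0}(k)$, gives $G(k)=E_{P_0}(k)\,P_0(k)\subseteq E_P(k)\,P(k)$. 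Since moreover $P(k[x])$ normalizes $E_P(k[x])$ (its Levi normalizes both $U_P$ and $U_{P^-}$, while $U_P(k[x])\subseteq E_P(k[x])$), one gets $G(k[x])=G(k)E_P(k[x])\subseteq E_P(k)P(k)E_P(k[x])\subseteq E_P(k[x])P(k[x])$, and the reverse inclusion is trivial. The same arguments apply verbatim with $P^-$ in place of $P$.

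The main obstacle is the homotopy-invariance input over $\Aff^1_k$ entering the two hypotheses of Lemma~\ref{lem:DEP} for $D=k[x]$: the triviality on $\Aff^1_k$ of the $P$-torsors arising from $(G/P)(k[x])$, and, above all, the affine-line equality $G(k[x])=G(k)\cdot E_P(k[x])$; everything else is formal manipulation with Lemmas~\ref{lem:EE} and~\ref{lem:DEP} together with standard structure theory of parabolic subgroups over a field. Some additional care is needed when $G$ has a non-trivial radical torus or a non-simply-connected semisimple part, but that should reduce to the semisimple simply connected case.
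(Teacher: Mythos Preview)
Your proof is correct and follows the same skeleton as the paper's: apply Lemma~\ref{lem:EE} with $B=k[x]$, $A=k[x]_g$, $h=f$, and feed it hypothesis~\eqref{eq:EBh-0} via Lemma~\ref{lem:DEP} over the Dedekind domain $k[x]$, checked separately for $P$ and $P^-$.

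The only real difference is in how the two hypotheses of Lemma~\ref{lem:DEP} are verified. For $G(k)=E_P(k)P(k)$ the paper simply cites~\cite[Exp.~XXVI, Th.~5.1]{SGA3}, whereas you rederive it from the relative Bruhat decomposition; both are fine, the citation is quicker. For the surjectivity of $G(k[x])\to(G/P)(k[x])$ the paper argues that the obstruction in $H^1_{\et}(k[x],P)=H^1_{\et}(k[x],L)$ dies under the composite $H^1_{\et}(k[x],L)\hookrightarrow H^1_{\et}(k(x),L)\hookrightarrow H^1_{\et}(k(x),G)$ (using~\cite[Prop.~2.2]{CTO} for the first arrow and~\cite[Exp.~XXVI, Cor.~5.10]{SGA3} for the second), while you instead invoke $\Aff^1$-homotopy invariance of $H^1_{\et}(-,L)$ and specialize at $x=0$. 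Your route is arguably more conceptual, the paper's avoids appealing to Raghunathan--Ramanathan. As for the ``affine line'' input $G(k[x])=G(k)\cdot E_P(k[x])$, the paper cites the Margaux--Soul\'e theorem~\cite{M} directly for an arbitrary reductive $G$ with a strictly proper parabolic, so your closing worry about reducing to the simply connected case is unnecessary here.
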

\begin{proof}
We check that Lemma~\ref{lem:DEP} applies to $G$ over $k[x]$.
By the Margaux-Soul\'e theorem~\cite{M} we have $G(k[x])=G(k)\cdot E_{P}(k[x])$. By~\cite[Exp. XXVI, Th. 5.1]{SGA3} $G(k)=E_P(k)\cdot P(k)$,
hence $G(k[x])=E_P(k[x])\cdot P(k)$. Let $L$ be a Levi subgoup of $P$. By~\cite[Prop. 2.2]{CTO}
the map $H^1_{\et}(k[x],L)\to H^1_{\et}(k(x),L)$ has trivial kernel. By~\cite[Exp. XXVI, Cor. 5.10]{SGA3} the map
$H^1_{\et}(k(x),L)\to H^1_{\et}(k(x),G)$ is injective. Hence $H^1_{\et}(k[x],L)\to H^1_{\et}(k[x],G)$ has trivial kernel.
Since $H^1_{\et}(k[x],P)=H^1_{\et}(k[x],L)$,
 the ``long'' exact sequence of \'etale cohomology associated to $1\to P\to G\to G/P\to 1$
then implies that $(G/P)(k[x])=G(k[x])/P(k[x])$. Then all the conditions of Lemma~\ref{lem:DEP} are satisfied for
$G$ over $k[x]$, and hence
$$
E_P(k[x]_f)=E_P(k[x])\cdot (E_P(k[x]_f)\cap P(k[x]_f)).
$$

Obviously, the same argument applies to any opposite parabolic subgroup $P^-$ of $P$.
Now we see that Lemma~\ref{lem:EE} applies to $G$ with $B=k[x]$, $A=k[x]_g$ and $h=f$. It follows that
$E_P(k[x]_{fg})=E_P(k[x]_f)E_P(k[x]_g)$.
\end{proof}

\begin{cor}\label{cor:DD'}
Let $B\subseteq A$ be two discrete valuation rings with a common uniformizer
$h\in B$, and such that
$B/hB\cong A/hA$. Let $G$ be a reductive group scheme over $B$ having a strictly proper parabolic
$B$-subgroup scheme $P$. Then $E_P(A_h)=E_P(A)\cdot E_P(B_h)$.
\end{cor}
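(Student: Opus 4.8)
The plan is to reduce the statement to Lemma~\ref{lem:EE}, applied to $B\subseteq A$ and $h$: since $A$ is a domain and $h\ne 0$, $h$ is a non-zero divisor in $A$, and $B/hB\cong A/hA$ is part of the hypothesis, so it suffices to exhibit a pair $P,P^-$ of opposite strictly proper parabolic $B$-subgroups for which~\eqref{eq:EBh-0} holds. I would fix a Levi $B$-subgroup $L$ of $P$ and take $P^-$ to be the opposite parabolic with $P\cap P^-=L$; it is again strictly proper, and $E_{P^-}(R)=E_P(R)$ for every $B$-algebra $R$. Observe that $B_h$ is the fraction field $K$ of $B$. Hence~\eqref{eq:EBh-0} will follow from Lemma~\ref{lem:DEP}, applied to $G$ over $D=B$ with $S=B\setminus\{0\}$ (so $D_S=B_h$) once with the parabolic $P$ and once with $P^-$, as soon as I check, for a discrete valuation ring $B$, that $(G/P)(B)=G(B)/P(B)$ and $G(B)=E_P(B)\cdot P(B)$, together with the analogous assertions for $P^-$.

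The core of the argument is therefore this verification over a discrete valuation ring $B$. Write $\kappa=B/hB$ for the residue field, let $o\in G/P$ be the class of $1$, and let $\Omega=U_{P^-}\cdot P\subseteq G$ be the big cell, so that $U_{P^-}\times P\to G$ is an open immersion onto $\Omega$ and the composite $U_{P^-}\hookrightarrow G\to G/P$ is an open immersion onto an open subscheme $U_{P^-}\cdot o$ of $G/P$ containing $o$. First I would record that the reduction maps $U_P(B)\to U_P(\kappa)$ and $U_{P^-}(B)\to U_{P^-}(\kappa)$ are surjective: this follows by descending induction along the filtrations with vector-group quotients of $U_P$ and $U_{P^-}$ recalled in the proof of Lemma~\ref{lem:Ui}, together with the resulting short exact sequences of groups of points, and it uses no Henselian hypothesis on $B$. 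Consequently $E_P(B)\to E_P(\kappa)$ is surjective. Over the field $\kappa$ one has $G(\kappa)=E_P(\kappa)\cdot P(\kappa)$ by~\cite[Exp. XXVI, Th. 5.1]{SGA3} (this is where strict properness of $P$ is used) and $(G/P)(\kappa)=G(\kappa)/P(\kappa)$, so $E_P(\kappa)$ surjects onto $(G/P)(\kappa)$. Now take an arbitrary $x\in(G/P)(B)$, let $\bar x$ be its reduction, choose $\bar e\in E_P(\kappa)$ with $\bar e\cdot o=\bar x$, and lift $\bar e$ to $e\in E_P(B)$. Then $e^{-1}\cdot x$ reduces to $o$, which lies in the open subscheme $U_{P^-}\cdot o$; since $B$ is local, the morphism $e^{-1}\cdot x\colon\Spec B\to G/P$ factors through it, i.e.\ $e^{-1}\cdot x=u\cdot o$ for some $u\in U_{P^-}(B)$. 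Hence $x$ is the image of $eu\in G(B)$, so $G(B)\to(G/P)(B)$ is surjective; combined with the torsor property of $G\to G/P$ this gives $(G/P)(B)=G(B)/P(B)$. Finally, for $g\in G(B)$ the element $eu$ just produced lies in $E_P(B)$ (as $U_{P^-}(B)\subseteq E_P(B)$) and satisfies $(eu)^{-1}g\in P(B)$, so $g\in E_P(B)\cdot P(B)$. Interchanging $P$ and $P^-$ throughout gives the two conditions for $P^-$.

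Assembling this, Lemma~\ref{lem:DEP} applies to $G$ over $B$ once for $P$ and once for $P^-$, and the resulting two identities are precisely~\eqref{eq:EBh-0}; then Lemma~\ref{lem:EE} applied to $B\subseteq A$, $h$, $P$, $P^-$ yields $E_P(A_h)=E_P(A)\cdot E_P(B_h)$, as required. I expect the main obstacle to be the verification in the second paragraph: over a discrete valuation ring not assumed Henselian, $G(B)\to G(\kappa)$ need not be surjective and the Grothendieck--Serre conjecture over $B$ is not available a priori, so all the leverage must come from the surjectivity of $E_P(B)\to E_P(\kappa)$ (valid because $U_P$ is an iterated extension of vector groups), after which the residual indeterminacy is confined to the big cell, where locality of $B$ suffices. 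A secondary point needing care is the precise residue-field input $(G/P)(\kappa)=G(\kappa)/P(\kappa)=E_P(\kappa)\cdot P(\kappa)$ and its dependence on strict properness of $P$.
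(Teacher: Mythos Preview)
Your proposal is correct and follows the same overall architecture as the paper: verify the hypotheses of Lemma~\ref{lem:DEP} for $G$ over the DVR $B$ (and for $P^-$), obtain~\eqref{eq:EBh-0}, and then invoke Lemma~\ref{lem:EE}. The only genuine difference lies in how you establish $(G/P)(B)=G(B)/P(B)$ and $G(B)=E_P(B)\cdot P(B)$. The paper simply cites \cite[Exp.~XXVI, Th.~5.1]{SGA3} for the second identity (valid over any local ring, not just fields) and uses the injectivity of $H^1_{\et}(B,P)\to H^1_{\et}(B,G)$ from \cite[Exp.~XXVI, Cor.~5.10]{SGA3} together with the long exact sequence for the first. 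You instead give a direct big-cell argument: lift an element of $E_P(\kappa)$ to $E_P(B)$ via the vector-group filtration of $U_{P^\pm}$, translate so the closed-point image lands at the base point, and use locality of $B$ to force the section into the open big cell. This is a valid and self-contained alternative; its virtue is that it avoids the cohomological input and makes transparent exactly where locality enters. Your stated worry that non-Henselian DVRs pose a difficulty is unfounded, however: the SGA3 results the paper invokes already hold over arbitrary semilocal rings, so the paper's two-line verification is legitimate and no extra work was actually needed.
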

\begin{proof}
As in the proof of Corollary~\ref{cor:Efg}, it is enough to check that the conditions
of Lemma~\ref{lem:DEP} hold for $G$ over $B$, and then apply Lemma~\ref{lem:EE}.
Since $B$ is a local ring, one has $G(B)=E_P(B)\cdot P(B)$
by~\cite[Exp. XXVI, Th. 5.1]{SGA3}.
Also, by~\cite[Exp. XXVI, Cor. 5.10]{SGA3} the map
$H^1_{\et}(B,P)\to H^1_{\et}(B,G)$ is injective. Hence
 the ``long'' exact sequence of \'etale cohomology associated to $1\to P\to G\to G/P\to 1$
implies that $(G/P)(B)=G(B)/P(B)$. Then all the conditions of Lemma~\ref{lem:DEP} are satisfied.
\end{proof}

\begin{lem}\label{lem:k[x]p}
Let $B=k[x]_p$ be a discrete valuation ring which is a local
ring of an affine line over a field $k$, let $K=k(x)$ be its fraction field.
Let $G$ be a simply connected semisimple algebraic group over $k$,
let $P$ be a strictly proper parabolic subgroup of $G$. Then
we have isomorphisms
$$
K_1^{G,P}(k) \cong K_1^{G,P}(k[x]_p) \cong  K_1^{G,P}(k(x)).
$$
\end{lem}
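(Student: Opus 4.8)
The plan is to move along the tower $k\subseteq k[x]\subseteq B\subseteq K=k(x)$ and to reduce everything to localisations of $k[x]$, where Lemma~\ref{lem:DEP} and Corollary~\ref{cor:Efg} apply. I would start with the bijection $K_1^{G,P}(k)\cong K_1^{G,P}(k[x])$: the map induced by $k\hookrightarrow k[x]$ is split injective, a retraction being the evaluation $k[x]\to k$ at $x=0$ (which carries $E_P(k[x])$ into $E_P(k)$ and is the identity on constants), and it is surjective because $G(k[x])=G(k)\cdot E_P(k[x])$ by the Margaux--Soul\'e theorem \cite{M}.

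For the remaining two isomorphisms, observe that $B$ is the filtered union of the rings $k[x]_f$ with $f$ coprime to $p$, that $K$ is the filtered union of all $k[x]_f$ with $f\neq 0$, and that $K_1^{G,P}$ commutes with filtered colimits of rings (since $G$, $U_P$, $U_{P^-}$ are of finite presentation). Hence it is enough to prove that $K_1^{G,P}(k[x])\to K_1^{G,P}(k[x]_f)$ is a bijection for every nonzero $f$, and then to pass to the two colimits, in which all transition maps are then bijective. Fixing such an $f$ and putting $A=k[x]_f$, I would apply Lemma~\ref{lem:DEP} — whose hypotheses for $G$ over $k[x]$ were verified in the proof of Corollary~\ref{cor:Efg} — with $S=\{f^n\}$, obtaining $G(A)=E_P(k[x])\cdot P(A)$ and $E_P(A)=E_P(k[x])\cdot\bigl(E_P(A)\cap P(A)\bigr)$. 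Choosing a Levi subgroup $L$ of $P$, so that $P=L\ltimes U_P$ with $U_P(R)\subseteq E_P(R)$ and $L$ normalising $E_P$, a routine manipulation of these equalities reduces the surjectivity of $K_1^{G,P}(k[x])\to K_1^{G,P}(A)$ to the identity $L(A)=L(k[x])\cdot\bigl(L(A)\cap E_P(A)\bigr)$, and its injectivity to $L(k[x])\cap E_P(A)=L(k[x])\cap E_P(k[x])$. Thus the lemma comes down to comparing the subgroups $L(R)\cap E_P(R)$ along $k\subseteq k[x]\subseteq A$.

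This comparison is the main obstacle. Write $L=L'\cdot Z$ with $L'=[L,L]$ simply connected semisimple and $Z=Z(L)^{\circ}$ the central torus. The torus is absorbed, since $Z(R)\subseteq E_P(R)$ for every commutative ring $R$ (a consequence of the commutator relations in the isotropic group $G$); for $L'$ one argues by induction on $\dim G$, using that a strictly proper parabolic $Q$ of $L'$ extends to a parabolic $Q'$ of $G$ with $Q'\subseteq P$, whence $E_Q(R)\subseteq E_{Q'}(R)\subseteq E_P(R)$, so that the inductive hypothesis — the lemma for the smaller simply connected group $L'$ — can be brought to bear. The irreducible case, when $L'$ is anisotropic, is precisely where homotopy invariance of the Whitehead group of simply connected semisimple groups is genuinely needed; the most economical organisation is to invoke at this stage the known isomorphism $K_1^{G,P}(k)\cong K_1^{G,P}(k(x))$, which follows from $K_1^{G,P}(F)=G(F)/\mathcal R$ \cite{Gi} together with homotopy invariance of $R$-equivalence for simply connected groups, and then to obtain the statement for $B$ by squeezing $K_1^{G,P}(B)$ between $K_1^{G,P}(k)$ and $K_1^{G,P}(k(x))$ along $k\subseteq B\subseteq K$: the composite of the two maps being a bijection, it remains to see that $K_1^{G,P}(k)\to K_1^{G,P}(B)$ is onto, which follows from $G(B)=E_P(k[x])\cdot P(B)$ (Lemma~\ref{lem:DEP}) together with the analysis of $L(B)$ above. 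I expect the careful control of $L(R)\cap E_P(R)$ — in particular absorbing the possibly non-split central torus and handling the anisotropic base case — to be the delicate step.
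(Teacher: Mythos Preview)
Your plan has a genuine gap at the ``squeeze'' step. Once you invoke the known isomorphism $K_1^{G,P}(k)\cong K_1^{G,P}(k(x))$, the composite $K_1^{G,P}(k)\to K_1^{G,P}(B)\to K_1^{G,P}(k(x))$ is bijective, and you choose to finish by proving the \emph{first} arrow surjective, reducing this via $G(B)=E_P(k[x])\cdot P(B)$ to $L(B)\subseteq G(k)\cdot E_P(B)$ and then to your inductive Levi analysis. But when $L'$ (or one of its simple factors) is anisotropic over $k$, the induction has no base case: there is no parabolic $Q$ of $L'$ to feed into the hypothesis, and you give no independent reason why $L'(B)$ should land in $G(k)\cdot E_P(B)$. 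Appealing to the squeeze precisely here is circular, since the squeeze is what demanded this surjectivity in the first place. (There are smaller issues too: the inclusion $E_{Q'}(R)\subseteq E_P(R)$ you write goes the wrong way for general $R$ --- for $Q'\subseteq P$ one has $U_{Q'}\supseteq U_P$, hence $E_{Q'}(R)\supseteq E_P(R)$, with equality only over semilocal $R$; the decomposition $L(A)=Z(A)\cdot L'(A)$ is not automatic; and $Z(R)\subseteq E_P(R)$ is asserted rather than proved.)

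The paper avoids the Levi entirely by attacking the \emph{second} arrow instead: it proves $K_1^{G,P}(B)\to K_1^{G,P}(k(x))$ is injective, i.e.\ that $G(k[x]_g)\cap E_P(k[x]_{gf})\subseteq E_P(k[x]_g)$ for coprime $f,g$. The factorisation $E_P(k[x]_{gf})=E_P(k[x]_g)\cdot E_P(k[x]_f)$ of Corollary~\ref{cor:Efg}, together with the glueing $G(k[x]_g)\cap G(k[x]_f)=G(k[x])$, pushes any such element into $G(k[x])\cap E_P(k[x]_f)$; Margaux--Soul\'e then writes it as an element of $G(k)\cdot E_P(k[x])$, and evaluating at $x=u$ with $f(u)\in k^\times$ (or using $G(k)=E_P(k)$ when $k$ is finite) shows the $G(k)$-factor is already elementary. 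No control of $L(R)\cap E_P(R)$ is ever needed, so the anisotropic obstacle simply does not arise.
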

\begin{proof}
The isomorphism $K_1^{G,P}(k) \cong   K_1^{G,P}(k(x))$
is \cite[Th. 5.8]{Gi}. Hence the map $K_1^{G,P}(k[x]_p)  \to K_1^{G,P}(k(x))$ is onto, and
$$
K_1^{G,P}(k[x]_p)=K_1^{G,P}(k) \oplus \ker\bigl( K_1^{G,P}(k[x]_p) \to
K_1^{G,P}(k(x))\bigr).
$$
Let us establish the triviality of the kernel.
This reduces to proving that
$$
K_1^{G,P}(k[x]_g)\to K_1^{G,P}(k[x]_{gf})
$$ has trivial kernel,
where $f,g\in k[x]$ are
coprime polynomials. Take $a \in G(k[x]_g)\cap E_{P}(k[x]_{gf})$. We need to show $a\in E_P(k[x]_g)$.
By Corollary~\ref{cor:Efg} we have that
$$
E_{P}(k[x]_{gf})=E_{P}(k[x]_g)\cdot E_P(k[x]_f).
$$
Multiplying $a$ by a suitable element of $E_P(k[x]_g)$, we then have $a\in G(k[x]_g)\cap E_P(k[x]_f)$.
Since $G(k[x]_f)\cap G(k[x]_g)=G(k[x])$, we conclude that $a\in G(k[x])\cap E_P(k[x]_f)$.
By the Margaux-Soul\'e theorem~\cite{M} we have $G(k[x])=G(k)\cdot E_{P}(k[x])$. Therefore
\begin{equation}\label{eq:aMar}
a\in G(k)\cdot E_P(k[x])\cap E_P(k[x]_f).
\end{equation}
If $k$ is finite, then $G$ is a quasi-split simply connected group over $k$ and $G(k)=E_P(k)$, so $a\in E_P(k[x])\subseteq E_P(k[x]_g)$,
as required. If $k$ is infinite, then there is $u\in k$ such that $f(u)\in k^\times$.
Then, taking $x=u$ in~\eqref{eq:aMar}, we see that $a|_{x=u}\in E_P(k)$, and hence
$a\in E_P(k)\cdot E_P(k[x])=E_P(k[x])\subseteq E_P(k[x]_g)$ as well.
\end{proof}

\section{Proof of the main results}

\begin{proof}[Proof of Theorem~\ref{thm:surj}]
Let $\hat D$ be the complete discrete valuation ring obtained by completing $D$ with respect to the maximal ideal, and let $\hat K$ be the fraction field of $\hat D$.
Let $h\in D$ be a common uniformizer of $D$ and $\hat D$. Then $K=D_h$ and $\hat K=\hat D_h$.
Let $g\in G(K)$ be any element.
Since $\hat D$ is henselian, the natural map $K_1^{G,P}(\hat D)\to K_1^{G,P}(\hat K)$ is surjective by~\cite[Prop. 8.10]{GSt}.
Hence $g\in G(\hat D)\cdot E_P(\hat K)$. By
Corollary~\ref{cor:DD'} we have $E_P(\hat K)=E_P(K)\cdot E_P(\hat D)$.
Multiplying $g\in G(K)$ by a suitable element of $E_P(K)$, we then achieve that $g\in G(\hat D)\le G(\hat K)$.
Since $G(\hat D)\cap G(K)=G(D)$, we conclude that $g\in G(D)$. This proves that $G(K)=G(D)\cdot E_P(K)$,
as required.
\end{proof}

\begin{lem}\label{lem:inj-sc}
Let $D$ be any discrete valuation ring, let $K$ be the fraction field of $D$, and
let $G$ be a reductive group scheme over $D$ having a strictly proper parabolic
$D$-subgroup scheme $P$. Let $G^{sc}$ be the simply connected cover of the derived subgroup $G^{der}$ of $G$ over $D$.
Then $G^{sc}$ has a strictly proper parabolic subgroup $P^{sc}$, and the canonical
homomorphism $G^{sc}(D)\to G^{der}(D)\to G(D)$ induces a surjection from the kernel of the map
\begin{equation}\label{eq:Gsc}
K_1^{G^{sc},P^{sc}}(D)\to K_1^{G^{sc},P^{sc}}(K)
\end{equation}
onto the kernel of
the map
\begin{equation}\label{eq:GDK}
K_1^{G,P}(D)\to K_1^{G,P}(K).
\end{equation}
\end{lem}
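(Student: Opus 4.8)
The plan is to transport everything along the central isogeny $\pi\colon G^{sc}\to G^{der}$ composed with the closed immersion $G^{der}\hookrightarrow G$, using crucially that $\pi$ is a \emph{finite} morphism.

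First I would set up $P^{sc}$. Parabolic subgroups of $G$, of $G^{der}$ and of $G^{sc}$ are in natural bijection: $P\cap G^{der}$ is a parabolic subgroup scheme of $G^{der}$ with $U_{P\cap G^{der}}=U_P$, and $P^{sc}:=\pi^{-1}(P\cap G^{der})$ is a parabolic subgroup scheme of $G^{sc}$ by~\cite[Exp.~XXVI]{SGA3}; moreover $\pi$ restricts to isomorphisms $U_{P^{sc}}\xrightarrow{\ \sim\ }U_P$ and $U_{(P^{sc})^-}\xrightarrow{\ \sim\ }U_{P^-}$ on the unipotent radicals of corresponding opposite parabolics (the kernel of $\pi$ being central of multiplicative type, it meets no unipotent subgroup), so that
$$
\pi\bigl(E_{P^{sc}}(R)\bigr)=E_P(R)\qquad\text{for every commutative $D$-algebra }R.
$$
For strict properness: if $N^{sc}$ is a non-trivial semisimple normal $D$-subgroup scheme of $G^{sc}$, then $N:=\pi(N^{sc})$ is a non-trivial semisimple normal $D$-subgroup scheme of $G$, so $N\not\subseteq P$ by hypothesis, whence $N^{sc}\not\subseteq\pi^{-1}(P\cap G^{der})=P^{sc}$. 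Finally, the displayed equality with $R=D$ shows that the homomorphism $G^{sc}(D)\to G(D)$ carries $E_{P^{sc}}(D)$ into $E_P(D)$ and therefore induces a homomorphism $\phi\colon K_1^{G^{sc},P^{sc}}(D)\to K_1^{G,P}(D)$; together with its analogue over $K$ it forms a commutative square with the maps~\eqref{eq:Gsc} and~\eqref{eq:GDK}, so $\phi$ maps the kernel of~\eqref{eq:Gsc} into the kernel of~\eqref{eq:GDK}. What remains is surjectivity onto the kernel of~\eqref{eq:GDK}.

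To prove it, I would start from $g\in G(D)$ representing a given class in the kernel of~\eqref{eq:GDK}, i.e.\ with $g\in E_P(K)$. Since the unipotent radicals of parabolic subgroups of $G$ lie in $G^{der}$, we have $E_P(K)\subseteq G^{der}(K)$; and as $G^{der}$ is closed in $G$, this gives $g\in G^{der}(K)\cap G(D)=G^{der}(D)$. By the displayed equality over $K$, $g=\pi(\tilde g_0)$ for some $\tilde g_0\in E_{P^{sc}}(K)\subseteq G^{sc}(K)$. The one genuine point is that \emph{a priori} $\tilde g_0$ need not be a $D$-point of $G^{sc}$, and I would resolve this by forming $Y:=G^{sc}\times_{G^{der},\,g}\Spec D$: the section $g\colon\Spec D\to G^{der}$ is a closed immersion, so $Y$ is a closed subscheme of $G^{sc}$, and $Y\to\Spec D$ is finite, being a base change of the finite isogeny $\pi$. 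The identity $\pi(\tilde g_0)=g|_K$ exhibits $\tilde g_0$ as a $K$-point of $Y$; since $Y\to\Spec D$ is finite, hence proper, and $D$ is a discrete valuation ring, the valuative criterion of properness extends $\tilde g_0$ to a $D$-point of $Y$, and composing with $Y\hookrightarrow G^{sc}$ shows $\tilde g_0\in G^{sc}(D)$.

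This closes the argument: $\tilde g_0\in G^{sc}(D)\cap E_{P^{sc}}(K)$, so its class $[\tilde g_0]\in K_1^{G^{sc},P^{sc}}(D)$ lies in the kernel of~\eqref{eq:Gsc}, while $\phi\bigl([\tilde g_0]\bigr)=[\pi(\tilde g_0)]=[g]$ is the class we started with. The step I expect to be the crux is exactly the one just described — obtaining an \emph{integral} lift $\tilde g_0$ without disturbing the fact that it lies in $E_{P^{sc}}(K)$; it is here that the finiteness, hence properness, of the isogeny $\pi$ is essential, and the argument would not go through for an arbitrary surjection of $D$-group schemes.
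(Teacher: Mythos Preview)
Your argument is correct and reaches the same conclusion as the paper, but the crucial lifting step is handled by a genuinely different mechanism. The paper proceeds cohomologically: writing $1\to C\to G^{sc}\xrightarrow{\pi}G^{der}\to 1$ with $C$ finite central of multiplicative type, it takes $g\in G^{der}(D)\cap E_{P^{der}}(K)$, observes that its boundary in $H^1_{\mathrm{fppf}}(K,C)$ vanishes, invokes the injectivity of $H^1_{\mathrm{fppf}}(D,C)\to H^1_{\mathrm{fppf}}(K,C)$ (Colliot-Th\'el\`ene--Sansuc) to produce \emph{some} lift $\tilde g\in G^{sc}(D)$, and then uses $C(D)=C(K)$ to correct $\tilde g$ by a central element so that the result lies in $G^{sc}(D)\cap E_{P^{sc}}(K)$. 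You instead lift $g$ first to $\tilde g_0\in E_{P^{sc}}(K)$ and then extend $\tilde g_0$ across $\Spec D$ directly by the valuative criterion of properness applied to the finite (hence proper) morphism $\pi$; no cohomology, no \cite{CTS}, and no separate $C(D)=C(K)$ step are needed. Your route is the more elementary of the two and makes transparent exactly which property of $\pi$ is being used. The paper's approach, on the other hand, separates the two obstructions (integrality of the lift, and its membership in $C\cdot E_{P^{sc}}$) and would adapt more readily to situations where one wants finer control via the kernel $C$. The reduction from $G$ to $G^{der}$ is essentially the same in both proofs, phrased by you via the closed immersion $G^{der}\hookrightarrow G$ and in the paper via the coradical exact sequence.
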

\begin{proof}
The intersection $P^{der}=G^{der}\cap P$ is a strictly proper parabolic subgroup of $G^{der}$ by~\cite[Exp. XXVI, Prop. 1.19]{SGA3}.
Let $\pi:G^{sc}\to G^{der}$ be the canonical homomorphism, then $P^{sc}=\pi^{-1}(P^{der})$ is a strictly proper
parabolic subgroup of $G^{sc}$.

There is a short exact sequence of algebraic groups
$$
1\to C\xrightarrow{i} G^{sc}\xrightarrow{\pi} G^{der}\to 1,
$$
where $C$ is a finite group of multiplicative type over $D$, central in $G^{sc}$. Write the respective ``long'' exact
sequences over $D$ and $K$
with respect to fppf topology. We obtain a commutative diagram
\begin{equation*}
\xymatrix@R=15pt@C=20pt{
1\ar[r]\ar@{=}[d]&C(D)\ar[d]\ar[r]^{i}&G^{sc}(D)\ar[d]\ar[r]^\pi &G^{der}(D)\ar[d]\ar[r]^{\hspace{-15pt}\delta}&H^1_{fppf}(D,C)\ar[d]\\
1\ar[r]&C(K)\ar[r]^{i}&G^{sc}(K)\ar[r]^\pi&G^{der}(K)\ar[r]^{\hspace{-15pt}\delta_K}&H^1_{fppf}(K,C)\\
}
\end{equation*}
Here the rightmost vertical arrow is injective by~\cite[Th. 4.1]{CTS}. Take any
$g\in G^{der}(D)\cap E_{P^{der}}(K)$. Then $\delta_K(g)=1$, since $E_{P^{sc}}(K)$ surjects onto $E_{P^{der}}(K)$.
Hence there is $\tilde g\in G^{sc}(D)$ with
$\pi(\tilde g)=g$. Clearly, $\tilde g\in C(K)\cdot E_{P^{sc}}(K)$, since $\pi(\tilde g)\in E_{P^{der}}(K)$. However,
since $D$ is a discrete valuation ring and $C$ is finite, we have $C(D)=C(K)$. (Note that $C$ embeds into into a quasi-split
$D$-torus which is a maximal torus of the unique quasi-split inner $D$-form of $G^{sc}$~\cite[Exp.\ XXIV, Proposition 3.13]{SGA3}, and hence $C$ embeds into some
split $D$-torus.) Therefore,
$$
\tilde g\in G^{sc}(D)\cap C(D)\cdot E_{P^{sc}}(K)= C(D)\cdot (G^{sc}(D)\cap E_{P^{sc}}(K)).
$$
Since $C(D)\le\ker\pi$,
it follows that $G^{sc}(D)\cap E_{P^{sc}}(K)$ surjects onto $G^{der}(D)\cap E_{P^{der}}(K)$.

Consider now the short exact sequence $1\to G^{der}\to G\to \corad(G)\to 1$~\cite[Exp. XXIII, 6.2.3]{SGA3}.
Taking into account that $\corad(G)(D)\to \corad(G)(K)$ is injective and $E_P(D)=E_{P^{der}}(D)$, $E_P(K)=E_{P^{der}}(K)$,
we immediately see that $G(D)\cap E_P(K)= G^{der}(D)\cap E_{P^{der}}(K)$. Therefore,
 $G^{sc}(D)\cap E_{P^{sc}}(K)$ surjects onto $G(D)\cap E_P(K)$, which is exactly the claim of the lemma.
\end{proof}

\begin{proof}[Proof of Theorem~\ref{thm:main}]
By Lemma~\ref{lem:inj-sc} we can assume that $G$ is simply connected semisimple.
If $k$ is a finite field, then $G$ is a quasi-split simply connected semisimple group, and since $D$ is local,
we have $K_1^{G,P}(D)=1$ and there is nothing to prove. Assume $k$ is infinite. We may assume that $D$
is a local ring of a smooth irreducible algebraic curve without loss of generality. By Ojanguren's lemma~\cite[Lemme 1.2]{CTO}
there is a maximal localization $k[x]_p$ of the polynomial ring $k[x]$ at a maximal ideal $p=(f)$ and an essentially \'etale
local homomorphism
$\phi:k[x]_p\to D$ such that $\phi(f)$ is a uniformizer of $D$ and the induced map $k[x]_p/f\cdot k[x]_p\to D/\phi(f)\cdot D$
is an isomorphism. Then the triple $B=k[x]_p$, $A=D$ and $h=f$ is subject to Corollary~\ref{cor:DD'} (note that
$\phi$ is injective, since it is flat). Hence $E_P(K)=E_P(D_{\phi(f)})=E_P(D)\cdot E_P(\phi(k(x)))$. Assume that $g\in G(D)$ is mapped into
$E_P(K)\le G(K)$. Then, multiplying $g$ by an element of $E_P(D)$, we achieve that $g\in G(D)\cap E_P(\phi(k(x)))=
\phi(G(k[x]_p)\cap E_P(k(x)))$.
By Lemma~\ref{lem:k[x]p} this implies that $g\in \phi(E_P(k[x]_p))\le E_P(D)$. Then $g\in E_P(D)$, and
the injectivity of $K_1^{G,P}(D)\to K_1^{G,P}(K)$ is proved.
\end{proof}

\section{Proof of Corollary~\ref{cor:sep}}

The following two lemmas are very standard, however, we were unable to find perfectly matching references.

\begin{lem}\label{lem:locmax-1}
Let $k$ be a field, and let $A$ be a finitely generated $k$-algebra. Let $D$ be a local ring of $A$. Then there is
a purely transcendental field extension $l=k(x_1,\ldots,x_n)$ of $k$  such that $D$ is
a localization of a finitely generated $l$-algebra at a maximal ideal. Moreover, if the residue field of $D$ is separable
over $k$, we may secure that it is separable over $l$.
\end{lem}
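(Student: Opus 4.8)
The goal is to realize $D$ as a localization of a finitely generated algebra over a purely transcendental extension $l = k(x_1,\dots,x_n)$, at a \emph{maximal} ideal, with control over the residue field. The plan is to first reduce to the case where $A$ is a domain and $D$ is its local ring at a prime $\mathfrak p$, then pass to the quotient field of $A/\mathfrak p$, and finally use a Noether-normalization-style trick to shrink the transcendence degree of the residue field down to zero.

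First I would replace $A$ by $A/\mathfrak q$, where $\mathfrak q$ is the (unique) minimal prime of $A$ contained in the prime $\mathfrak p \subseteq A$ corresponding to $D$; since $D = A_{\mathfrak p}$ only sees what happens inside $A_{\mathfrak p}$, and $\mathfrak q A_{\mathfrak p}$ is the nilradical there, we may assume $A$ is a finitely generated $k$-domain and $D = A_{\mathfrak p}$. Let $F = \Frac(A)$ and let $\kappa = A_{\mathfrak p}/\mathfrak p A_{\mathfrak p}$ be the residue field of $D$; both are finitely generated field extensions of $k$. Now choose a transcendence basis: I claim one can pick elements $x_1,\dots,x_n \in A$ whose images in $\kappa$ form a separating transcendence basis of $\kappa/k$ when the latter is separable (this is where the ``separable'' hypothesis is used — the general case just needs a transcendence basis of $\kappa/k$, possibly after first shrinking to the residue field's own generators). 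Set $l = k(x_1,\dots,x_n) \subseteq F$; one must check $x_1,\dots,x_n$ remain algebraically independent over $k$ as elements of $F$, which holds because they are already algebraically independent in the quotient $\kappa$.

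Next I would base-change: let $A' = A \otimes_k l = A[x_1^{-1}, (\text{denominators})^{-1}]$-type construction — more precisely, $A' = A_{S^{-1}}$ where $S = k[x_1,\dots,x_n]\setminus\{0\}$, so $A'$ is a finitely generated $l$-algebra (finitely generated as a $k[x_1,\dots,x_n]$-localization of the finitely generated $k$-algebra $A$, hence finitely generated over $l$). The prime $\mathfrak p$ survives in $A'$ since $S \cap \mathfrak p = \emptyset$ (as $\mathfrak p$ contracts to $(0)$ in $k[x_1,\dots,x_n]$, because the $x_i$ are nonzero in $\kappa$), giving a prime $\mathfrak p' \subseteq A'$ with $A'_{\mathfrak p'} = A_{\mathfrak p} = D$ and the same residue field $\kappa$. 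The point of the construction is that now $\kappa$ is \emph{algebraic} over $l$: the $x_i$ were chosen to be a transcendence basis of $\kappa/k$, so $\kappa$ has transcendence degree $0$ over $l$, i.e.\ $\kappa/l$ is a finite extension.

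The main obstacle is the final step: upgrading ``$\mathfrak p'$ is a prime with finite residue field extension over $l$'' to ``$\mathfrak p'$ is contained in a \emph{maximal} ideal with the \emph{same} localization.'' Here I would invoke Noether normalization for the domain $A'/\mathfrak p'$ over $l$: since $\kappa = \Frac(A'/\mathfrak p')$ is finite over $l$, the domain $A'/\mathfrak p'$ is already integral over $l$ after inverting one further element $t \in A'\setminus\mathfrak p'$ (clear denominators of an integral-dependence relation for each generator), so $A'_t/\mathfrak p' A'_t$ is a finite $l$-algebra that is a domain, hence a field — that is, $\mathfrak p' A'_t$ is already maximal in $A'_t$. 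Replacing $A'$ by $A'_t$ (still finitely generated over $l$, still with localization $D$ at $\mathfrak p'$) finishes the proof, and the residue field $\kappa$ is separable over $l$ precisely when we chose a separating transcendence basis, i.e.\ exactly under the separability hypothesis. The one genuinely delicate verification is the existence of the separating transcendence basis and its survival as algebraically independent elements of $A$ itself (not just of $\kappa$); this is standard field theory (MacLane's criterion) combined with the observation that the specialization map $A \to \kappa$ is injective on $k[x_1,\dots,x_n]$.
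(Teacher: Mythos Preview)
Your core strategy---lift a (separating) transcendence basis of the residue field $\kappa$ to elements $x_1,\dots,x_n\in A$, set $l=k(x_1,\dots,x_n)$, and localize $A$ at $S=k[x_1,\dots,x_n]\setminus\{0\}$---is exactly the paper's approach. Two points deserve comment.

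First, your preliminary reduction to $A$ a domain is both unjustified and unnecessary. There need not be a \emph{unique} minimal prime of $A$ contained in $\mathfrak p$ (equivalently, $A_{\mathfrak p}$ need not have a unique minimal prime), and even when there is one, passing to $A/\mathfrak q$ replaces $D$ by $D/\mathfrak qD$, which equals $D$ only when $D$ is already reduced. The paper simply skips this step: the argument that $k[x_1,\dots,x_n]\hookrightarrow A$ and that $S\cap\mathfrak p=\emptyset$ only uses that the images of the $x_i$ in $\kappa$ are algebraically independent, and works regardless of whether $A$ is a domain.

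Second, your final localization at $t$ is superfluous. Once you know $A'/\mathfrak p'$ is a finitely generated $l$-algebra which is a domain with fraction field $\kappa$ finite over $l$, every element of $A'/\mathfrak p'$ is algebraic over the field $l$, hence integral; thus $A'/\mathfrak p'$ is already a finite $l$-module and therefore a field, so $\mathfrak p'$ is maximal in $A'$ with no further localization. This is what the paper asserts in its last sentence.
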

\begin{proof}
Let $q\subset A$ be a prime ideal such that $D=A_q$, and let $L$ be the residue field of $D$. We have $L=\Frac(A/q)$.
Let $L'=k(t_1,\ldots,t_n)$ be a purely transcendental field extension of $k$ such that $k\subseteq L'\subseteq L$
and $L$ is a finite extension of $L'$, which is also separable if $L$ is separable over $k$.
Let $x_1,\ldots,x_n\in A$ be any lifts of $t_1,\ldots,t_n$.
Then $l=k(x_1,\ldots,x_n)$ is a purely transcendental field extension of $k$ that embeds into $D=A_q$ and is mapped
isomorphically onto $L'$ inside $L=A_q/qA_q$. Then
$A'=A\otimes_{k[x_1,\ldots,x_n]} l$ is a finitely generated $l$-algebra such that $D$ is a localization of $A'$
at a prime ideal $q'$. Moreover, $L$ is a finite extension of $l\cong L'$, so, in particular, $q'$ is a maximal
ideal of $A'$.
\end{proof}

\begin{lem}\label{lem:locmax-2}
Let $k$ be a field, and let $A$ be a finitely generated $k$-algebra. Let $D$ be a local ring of $A$ such that $D$ is regular
and the residue field $L$ of $D$ is separable over $k$. Then there is
a purely transcendental field extension $l=k(x_1,\ldots,x_n)$ of $k$ such that
$D$ is a localization of an integral smooth $l$-algebra at a maximal ideal.
\end{lem}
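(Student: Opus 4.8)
The plan is to bootstrap from Lemma~\ref{lem:locmax-1}: that lemma already exhibits $D$ as a localization $A'_{q'}$ of a finitely generated $l$-algebra $A'$ at a \emph{maximal} ideal $q'$, for some purely transcendental extension $l=k(x_1,\dots,x_n)$ of $k$, and — this is the point that matters — it can be arranged so that the residue field $L$ of $D$ is separable \emph{over $l$}, not merely over $k$. So I would begin by invoking Lemma~\ref{lem:locmax-1} with the extra separability clause to fix such $l$, $A'$ and $q'$, with $D\cong A'_{q'}$, $\kappa(q')=L$, and $L/l$ separable.

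The second step is to upgrade "regular" to "smooth". Since, by hypothesis, $D=A'_{q'}$ is regular and its residue field $L=\kappa(q')$ is separable over $l$, the local $l$-algebra $A'_{q'}$ is geometrically regular over $l$; hence the standard smoothness criterion over a field (regularity of the local ring together with separability of its residue field over the base field implies smoothness; see EGA~IV$_4$, 17.15) shows that $\Spec A'\to\Spec l$ is smooth at the point $q'$. As smoothness is open on the source, there is $s\in A'\setminus q'$ such that the finitely generated $l$-algebra $A'_s$ is smooth over $l$; moreover $q'$ remains a maximal ideal of $A'_s$ and $(A'_s)_{q'}\cong D$.

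The third step extracts an integral domain. A smooth $l$-algebra is regular, and a Noetherian regular ring is normal by Serre's $R_1+S_2$ criterion, hence a finite product of normal domains indexed by its (finitely many) connected components. Writing $A'_s\cong\prod_i B_i$ accordingly, each $B_i$ is an integral smooth finitely generated $l$-algebra. Since $D$ is local, the idempotent cutting out one factor is not in $q'$, so $q'$ lies over a maximal ideal $\mathfrak m$ of a single factor $B:=B_{i_0}$ and localizing kills the others; therefore $D\cong(A'_s)_{q'}\cong B_{\mathfrak m}$, exhibiting $D$ as a localization of the integral smooth $l$-algebra $B$ at a maximal ideal, as desired.

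I expect the only genuine subtlety to be in the second step, and precisely in the fact that the separability invoked there must be separability \emph{over $l$}. For imperfect $k$ one can have $k\subseteq l\subseteq L$ with $l$ purely transcendental over $k$ but $L/l$ purely inseparable (for instance $l=k(t^p)\subseteq k(t)=L$), so "$D$ regular and $L/k$ separable" would \emph{not} be enough to conclude smoothness over $l$. This is exactly why the proof must use the refined conclusion of Lemma~\ref{lem:locmax-1} rather than its coarser form; once a suitable $l$ with $L/l$ separable is in hand, the passage from geometric regularity to smoothness, and then the splitting into a product of domains, are routine.
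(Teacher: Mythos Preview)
Your proof is correct and follows essentially the same route as the paper's: invoke Lemma~\ref{lem:locmax-1} (with its separability clause) to reduce to a maximal ideal with separable residue field over the new base $l$, apply the smoothness criterion (regular local ring with separable residue field over the base field) to get smoothness at that point, pass to an open affine neighbourhood that is smooth, and then split off the relevant integral component. Your remark that separability \emph{over $l$} is what is actually needed, and that this is why the refined conclusion of Lemma~\ref{lem:locmax-1} must be used, is exactly the point the paper is implicitly relying on.
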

\begin{proof}
Let $q\subset A$ be a prime ideal such that $D=A_q$.
By Lemma~\ref{lem:locmax-1} we can assume without loss of generality that $q$ was a maximal ideal of $A$.
Since $D$ is a regular ring and its residue field $L$ is separable over $k$, and $A$ is of finite type over $k$,
by~\cite[Tag 00TV]{Stacks}
$A$ is smooth over $k$ at $q$. By definition, it means that there is $g\in A$, $g\not\in q$, such that
$k\to A_g$ is a smooth ring map (in particular, $A_g$ is of finite type over $k$). Since $A_g$ is smooth over $k$,
by~\cite[Tag 00TT]{Stacks} the $k$-algebra $A_g$ is a regular ring. Hence $A_g$ is a finite direct sum of regular domains,
and there is  $f\in A$ such that $f\not\in q$ and $A_{gf}$ is a regular domain, so that $D$ is a localization of $A_{gf}$
at $qA_{gf}$. Thus, $D$ is a localization of an integral smooth $k$-algebra $A_{gf}$ at a maximal ideal.
\end{proof}

\begin{proof}[Proof of Corollary~\ref{cor:sep}]
Follows immediately from Theorem~\ref{thm:main} and Lemma~\ref{lem:locmax-2}.
\end{proof}

\renewcommand{\refname}{References}

\end{document}